\DeclareAcronym{uq}{short=uq, long=uncertainty quantification}
\DeclareAcronym{pdf}{short=pdf, long=probability density function}
\DeclareAcronym{cdf}{short=cdf, long=cumulative distribution function}
\DeclareAcronym{mcmc}{short=mcmc, long=Markov chain Monte Carlo}
\DeclareAcronym{iid}{short=iid, long=independent and identically distributed}
\DeclareAcronym{ks}{short=ks, long=Kolmogorov--Smirnov}
\DeclareAcronym{abc}{short=abc, long=approximate Bayesian computation}
\DeclareAcronym{cif}{short=cif, long=calibration influence function}
\DeclareAcronym{mks}{short=mks, long=multi-dimensional Kolmogorov--Smirnov}
\DeclareAcronym{nci}{short=nci, long=non-credibility index}
\DeclareAcronym{anees}{short=anees, long=average normalised estimation error squared}
\DeclareAcronym{pit}{short=pit, long=predictive integral transform}
\DeclareAcronym{gp}{short=gp, long=Gaussian process}
\DeclareAcronym{ode}{short=ode, long=ordinary differential equation}
\DeclareAcronym{pnm}{short=pnm, long=probabilistic numerical method}
\newcommand*{\rd}{\mathrm{d}}
\newcommand*{\wrt}{\rd}
\newcommand*{\defeq}{\coloneqq}
\newcommand*{\naturals}{\mathbb{N}}
\newcommand*{\reals}{\mathbb{R}}
\newcommand*{\tr}{^{\mkern-1.5mu\mathsf{T}}}
\newcommand{\norm}[1]{\lVert #1 \rVert}
\newcommand{\Absval}[1]{\left\vert #1 \right\vert}
\newcommand*{\arXiv}[1]{\bgroup\color{blue}\href{https://arxiv.org/abs/#1}{arXiv:#1}\egroup}
\newcommand*{\doi}[1]{\bgroup\color{blue}\href{https://doi.org/#1}{doi:#1}\egroup}
\newcommand*{\email}[1]{\bgroup\color{blue}\href{mailto:#1}{#1}\egroup}
\renewcommand*{\url}[1]{\bgroup\color{blue}\href{#1}{#1}\egroup}
\setlist[enumerate]{nosep}
\setlist[itemize]{nosep}
\renewcommand{\qedsymbol}{$\blacksquare$}
\renewenvironment{proof}[1][\proofname]{\noindent{\bfseries\sffamily #1.} }{\hfill\qedsymbol\medskip}
\let\oldtitle\title
\renewcommand{\title}[1]{\oldtitle{#1}\newcommand{\theshorttitle}{#1}}
\newcommand{\shorttitle}[1]{\renewcommand{\theshorttitle}{#1}}
\let\oldauthor\author
\renewcommand{\author}[1]{\oldauthor{#1}\newcommand{\theshortauthor}{#1}}
\newcommand{\shortauthor}[1]{\renewcommand{\theshortauthor}{#1}}
\newcommand{\theabstract}[1]{\par\bgroup\noindent\textbf{\textsf{Abstract.}} #1\egroup}
\newcommand{\thekeywords}[1]{\par\smallskip\bgroup\noindent\textbf{\textsf{Keywords.}}\newcommand{\and}{ $\bullet$ } #1\egroup}
\newcommand{\themsc}[1]{\par\smallskip\bgroup\noindent\textbf{\textsf{2020 Mathematics Subject Classification.}}\newcommand{\and}{ $\bullet$ } #1\egroup}
\newcommand*{\affilref}[1]{\ref{affiliation#1}}
\newcommand*{\affiliation}[3]{
	\footnotetext[#1]{\label{affiliation#2} #3}
}
\newcommand{\converteps}[1]{#1-eps-converted-to.pdf}
\numberwithin{equation}{section}
\numberwithin{figure}{section} 
\numberwithin{table}{section}
\newtheorem{theorem}{\sffamily Theorem}[section]
\newtheorem{corollary}[theorem]{\sffamily Corollary}
\newtheorem{lemma}[theorem]{\sffamily Lemma}
\theoremstyle{definition}
\newtheorem{definition}[theorem]{\sffamily Definition}
\newtheorem{remark}[theorem]{\sffamily Remark}
\newtheorem{example}[theorem]{\sffamily Example}
\crefname{conjecture}{Conjecture}{Conjectures}
\crefname{figure}{Figure}{Figures}
\crefname{question}{Question}{Questions}
\title{Testing whether a Learning \\ Procedure is Calibrated}
\shorttitle{Testing whether a Learning Procedure is Calibrated}
\author{%
	Jon~Cockayne\textsuperscript{\affilref{Soton}}%
	\and
	Matthew M.\ Graham\textsuperscript{\affilref{UCL}}%
	\and
	Chris J.\ Oates\textsuperscript{\affilref{Newcastle},\affilref{Turing}}%
	\and
	T.~J.\ Sullivan\textsuperscript{\affilref{Warwick},\affilref{Turing}}%
	\and
	Onur~Teymur\textsuperscript{\affilref{Kent}}
}
\begin{document}

\maketitle
\affiliation{1}{Soton}{Mathematical Sciences, University of Southampton, Highfield, Southampton, SO17 1BJ, UK (\mbox{\email{jon.cockayne@soton.ac.uk}})}
\affiliation{2}{UCL}{Centre for Advanced Research Computing, University College London, Gower Street, London, WC1E 6BT, UK (\mbox{\email{m.graham@ucl.ac.uk}})}
\affiliation{3}{Newcastle}{School of Mathematics, Statistics \& Physics, Newcastle University, Newcastle upon Tyne, NE1 7RU, UK (\email{chris.oates@ncl.ac.uk})}
\affiliation{4}{Turing}{Alan Turing Institute, British Library, 96 Euston Road, London NW1 2DB, UK}
\affiliation{5}{Warwick}{Mathematics Institute and School of Engineering, University of Warwick, Coventry, CV4 7AL, UK (\email{t.j.sullivan@warwick.ac.uk})}
\affiliation{6}{Kent}{School of Mathematics, Statistics \& Actuarial Science, University of Kent, Cantebury, CT2 7NZ, UK (\email{o@teymur.uk})}

\begin{abstract}
\theabstract{A learning procedure takes as input a dataset and performs inference for the parameters $\theta$ of a model that is assumed to have given rise to the dataset.
Here we consider learning procedures whose output is a probability distribution, representing uncertainty about $\theta$ after seeing the dataset.
Bayesian inference is a prime example of such a procedure, but one can also construct other learning procedures that return distributional output.
This paper studies conditions for a learning procedure to be considered \textit{calibrated}, in the sense that the true data-generating parameters are plausible as samples from its distributional output.
A learning procedure whose inferences and predictions are systematically over- or under-confident will fail to be calibrated.
On the other hand, a learning procedure that \textit{is} calibrated need not be statistically efficient.
A hypothesis-testing framework is developed in order to assess, using simulation, whether a learning procedure is calibrated.
Several vignettes are presented to illustrate different aspects of the framework.}
\thekeywords{%
calibration%
\and%
credible sets%
\and%
uncertainty quantification%
}
\themsc{%
62A01
\and%
62F25
\and%
62F35
\and%
60J20
}
\end{abstract}

\section{Introduction}  

Given a parametric model and a dataset purported to be generated from the model, the modern workflow for parameter inference first identifies a statistical paradigm (e.g.\ Bayesian inference), performs any required numerical computation using an appropriate numerical method, then inspects the results and refines the approach until some \textit{desiderata} (e.g.\  posterior predictive checks, or a convergence diagnostic for a Markov chain Monte Carlo method) are satisfied.
This paper takes a holistic perspective and refers to the overall workflow as a \textit{learning procedure}. 
Our focus is on learning procedures that produce distributional output, examples of which include workflows based on Bayesian and generalised Bayesian inference \citep{Bissiri2016}, fractional posteriors \citep{Bhattacharya2019}, empirical Bayes \citep{Casella1985}, variational Bayes \citep{Blei2017}, approximate Bayesian computation \citep{beaumont2002approximate}, Bayesian synthetic likelihood \citep{price2018bayesian}, and also approaches that have a non-Bayesian motivation, such as the maximum entropy approach \citep{jaynes1982rationale}.

It is natural to hope that a learning procedure is \textit{calibrated}, in the sense that the true data-generating parameters are plausible as samples from the distributional output.
Indeed, a learning procedure that is \textit{not} calibrated can produce inferences and predictions that are either biased or over/under-confident, and lead users to draw spurious conclusions in model selection problems.
The consequences of over-confidence, in particular, could be dire when those inferences are used in safety-critical applications.
This point has been discussed at length in the literature, such as in investigating frequentist coverage of credible sets in Bayesian inference and in calibrating probabilistic forecasts.
However, the literature appears to lack a definition of ``calibration'' that is sufficiently general to be applied to an arbitrary learning procedure that produces distributional output.
The aim of this paper is to introduce a general definition of ``calibration'' and accompany this with a methodology for testing whether a learning procedure is calibrated.

The term \textit{calibration} is unfortunately overloaded in the statistical literature.
It is also used to refer to the parameter inference task in applications that involve a computer model. For example,  \cite{kennedy2001bayesian} write that ``\textit{the process of fitting the model to the observed data by adjusting the parameters is known as calibration}''. For avoidance of doubt, we use the standard terminology of \textit{parameter inference} to refer to the task of estimating parameters of a model.
The term `calibration' is also used in the literature on forecast assessment.
There the useage is close to the notions proposed in this paper, though in that literature the focus is on testing calibration at the level of the \emph{data} rather than at the level of the parameters.
This is discussed further in \cref{subsubsec: strong existing,subsubsec: related weak}.
We reserve the term \textit{calibration} for the specific notions proposed in this paper.

The outline of the paper is as follows:
\Cref{sec: methods} presents our proposed definitions, where we identify both \textit{strong} and \textit{weak} senses in which a learning procedure can be said to be calibrated.
To ensure our definitions are precise in a mathematical sense, we conceptualise a learning procedure as a mathematical object in \Cref{subsec: notation} and impose mild regularity assumptions on this object in \Cref{subsec: charac}.
In \Cref{subsec: strong calib} our notion of strong calibration is presented, illustrated by examples in \Cref{subsubsec: examples strong}, and compared to existing definitions in the literature in \Cref{subsubsec: strong existing}.
Likewise, in \Cref{subsec: weak calib} our notion of weak calibration is presented, illustrated by examples in \Cref{subsubsec: examples weak}, and compared to existing definitions in the literature in \Cref{subsubsec: related weak}.
Several vignettes are provided in \Cref{sec: simulations}, showing through simulations that our proposed definitions of calibration both accord with intuition and can be tested for.
A brief discussion concludes the paper in \Cref{sec: discuss}.

\subsection{Notation}

For a measurable space $S$, $\mathcal{P}(S)$ will denote the set of probability measures on $S$.
For $s \in S$ let $\delta(S) \in \mathcal{P}(s)$ denote the Dirac distribution on $s$.
For a measurable function $f \colon S \to \reals$, a measurable set $A \subseteq \reals$, and a probability measure $\nu \in \mathcal{P}(S)$, let $f^{-1}(A) \defeq \{x \in S \mid f(x) \in A\}$ denote the \emph{preimage} of $A$ and recall that the \emph{pushforward} measure $f_\# \nu \in \mathcal{P}(\reals)$ is defined as $(f_\#\nu)(A) \defeq \nu(f^{-1}(A))$.

\section{What it Means for a Learning Procedure to be Calibrated} \label{sec: methods}

This section sets out our proposed definitions of strong and weak calibration, provides examples of learning procedures that are strongly and weakly calibrated, and relates our definitions to existing work.

\subsection{Set-Up} \label{subsec: notation}


Let $\Theta$ be a measurable space, which will play the role of the \textit{parameter space} in this work.
It is assumed that there is a unique ``true'' parameter $\theta \in \Theta$ and we consider the \textit{parameter inference} task of estimating $\theta$ based on a dataset.
Let $Y$ be a measurable space in which datasets are realised.

\begin{definition}[Learning Procedure] \label{def: learning proc}
A \emph{learning procedure} is a function
\begin{align*}
\mu \colon \mathcal{P}(\Theta) \times Y & \to \mathcal{P}(\Theta) \\
(\mu_0,y) & \mapsto \mu(\mu_0,y) .
\end{align*}
Here $\mu_0$ is interpreted as an initial \emph{belief distribution}, quantifying uncertainty about the parameter $\theta$ before any data have been observed, and $y$ denotes a dataset.
The distributional output $\mu(\mu_0,y)$ is interpreted as a quantification of the uncertainty associated with the parameter $\theta$, after the data $y$ have been observed.
\end{definition}

The standard example of a learning procedure is Bayesian inference, wherein $\mu_{0}$ is the prior distribution and $\mu (\mu_{0}, y)$ is the posterior distribution, this being determined by the prior, the observed data $y$, and a likelihood function that must be specified.
However, \Cref{def: learning proc} is general enough to accommodate any workflow that produces distributional output.
In particular, \Cref{def: learning proc} does not pre-suppose that a data-generating model exists or is known to the user, so that the definition of a learning procedure may be applied even in the \emph{M-open} setting \citep[\S6.1.2]{Bernardo2000}.
Further, one may consider that computational procedures such as variational inference or Monte Carlo form part of the learning procedure, and in this sense a myriad of different learning procedures can be considered.

Note that we call $\mu_0$ a \textit{belief distribution} following \cite{Bissiri2016} and reserve the term \textit{prior} for use only in the Bayesian context.
We also emphasise that a learning procedure need not depend upon the initial belief distribution $\mu_0$; for example, in the maximum entropy approach \citep{jaynes1982rationale} a distributional output is produced that does not explicitly depend on any initial belief, so that effectively $\mu(\mu_0, y) \equiv \mu(y)$.

In the next section we will introduce the mathematical facts required for our notions of strong and weak calibration in \Cref{subsec: strong calib,subsec: weak calib}.

\subsection{A Mathematical Characterisation} \label{subsec: charac}

The definitions that we will present rely on \acp{cdf} and their inverses, and we therefore impose regularity conditions to ensure that such inverse \acp{cdf} are well-defined.
That is, we impose sufficient regularity to restrict our attention in the sequel to inverse \acp{cdf} that are well-defined functions, as opposed to dealing with generalised functions that are set-valued.

\begin{definition}[Regular Distribution] \label{def: reg def}
Let $\Theta$ be a measurable space equipped with a reference measure $\lambda$.
A distribution $\nu \in \mathcal{P}(\Theta)$ is \emph{regular} (with respect to $\lambda$) if it admits a \ac{pdf} $p_\nu \defeq \rd \nu / \rd \lambda$ such that $p_\nu > 0$ on $\Theta$ (i.e. the measures $\nu$ and $\lambda$ are \emph{equivalent}).
The set of all regular distributions will be denoted $\mathcal{P}_r(\Theta)$.
\end{definition}

\noindent When $\Theta$ is a Borel- or Lebesgue-measurable subset of Euclidean space, the reference measure $\lambda$ will be assumed to be Lebesgue measure.
For $-\infty \leq a < b \leq \infty$ and a univariate distribution $\gamma \in \mathcal{P}((a,b))$, we let $F_\gamma \colon (a,b) \to [0,1]$ denote the associated \ac{cdf} $F_\gamma(x) \defeq \gamma((a,x]) = \int_a^x \rd \gamma$.
Our first result, \Cref{lem: reg to uniform}, is classical \citep[e.g.][]{rosenblatt1952remarks} and underpins methods for simulation of univariate random variables using inverse \acp{cdf}.
This result establishes that the level of regularity in \Cref{def: reg def} is sufficient for the inverse \ac{cdf} approach to simulation of such distributions to be applied.
It also ensures that our subsequent constructions that depend on \Cref{def: reg def} are well-defined.

\begin{lemma} \label{lem: reg to uniform}
For $-\infty \leq a < b \leq \infty$ and $\nu \in \mathcal{P}_r((a,b))$ we have that $F_\nu(X) \sim \mathcal{U}(0,1)$ whenever $X \sim \nu$.
\end{lemma}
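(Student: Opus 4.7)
The plan is to verify directly that the pushforward measure $(F_\nu)_\# \nu$ coincides with the uniform distribution on $(0,1)$ by checking their cumulative distribution functions agree. The key observation, which the regularity hypothesis makes available, is that $F_\nu$ is a continuous, strictly increasing bijection from $(a,b)$ onto $(0,1)$, so in particular it has a genuine (single-valued) inverse $F_\nu^{-1}\colon(0,1)\to(a,b)$.

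First I would establish these structural properties of $F_\nu$. Since $\nu$ admits a density $p_\nu$ with respect to Lebesgue measure, $F_\nu(x) = \int_a^x p_\nu(t)\,\rd t$ is absolutely continuous, hence continuous. The regularity assumption $p_\nu > 0$ on $(a,b)$ implies that for any $a < x_1 < x_2 < b$,
\begin{equation*}
F_\nu(x_2) - F_\nu(x_1) = \int_{x_1}^{x_2} p_\nu(t)\,\rd t > 0,
\end{equation*}
so $F_\nu$ is strictly increasing. Monotone convergence yields $\lim_{x\to a^+} F_\nu(x) = 0$ and $\lim_{x\to b^-} F_\nu(x) = 1$, so by the intermediate value theorem the range of $F_\nu$ is exactly $(0,1)$. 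Thus $F_\nu$ is a bijection with a well-defined, strictly increasing, continuous inverse.

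Next I would compute the law of $F_\nu(X)$ for $X\sim\nu$. Fix $u \in (0,1)$; by the bijectivity just established, the event $\{F_\nu(X) \leq u\}$ equals the event $\{X \leq F_\nu^{-1}(u)\}$, and therefore
\begin{equation*}
\mathbb{P}(F_\nu(X) \leq u) = \nu\bigl((a, F_\nu^{-1}(u)]\bigr) = F_\nu\bigl(F_\nu^{-1}(u)\bigr) = u.
\end{equation*}
For $u \leq 0$ and $u \geq 1$ the probabilities are trivially $0$ and $1$ respectively. This identifies the cumulative distribution function of $F_\nu(X)$ with that of $\mathcal{U}(0,1)$, completing the argument.

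There is no real obstacle here; the only subtlety worth being careful about is not conflating the generalised inverse (needed when $F_\nu$ has flat pieces or jumps) with a true inverse. The regularity hypothesis of \Cref{def: reg def} is precisely what rules out both flat pieces (via strict positivity of $p_\nu$) and jumps (via absolute continuity), so the elementary pointwise inversion step is justified without recourse to the generalised inverse machinery.
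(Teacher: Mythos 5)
Your proof is correct, but it takes a genuinely different route from the paper's. You argue at the level of distribution functions: having established that $F_\nu$ is a continuous, strictly increasing bijection of $(a,b)$ onto $(0,1)$, you compute $\mathbb{P}(F_\nu(X) \leq u) = \nu\bigl((a, F_\nu^{-1}(u)]\bigr) = u$ directly and match the \textsc{cdf} of $F_\nu(X)$ against that of $\mathcal{U}(0,1)$. The paper instead works at the level of densities: it observes that $F_\nu$ is differentiable with $DF_\nu = p_\nu$ and applies the change-of-variables formula for pushforwards, so that the density of $Z = F_\nu(X)$ is $\sum_{\theta : F_\nu(\theta)=z} p_\nu(\theta)\,|DF_\nu(\theta)|^{-1} = 1$. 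Your argument is the more elementary of the two --- it needs only continuity and strict monotonicity of $F_\nu$, and sidesteps the change-of-variables machinery (which tacitly requires a.e.\ differentiability and a nonvanishing derivative to be applied cleanly). The paper's density-based computation, on the other hand, is the template that is reused immediately afterwards for pushforwards under general test functions $f \in \mathcal{F}_\Theta$ in \eqref{eq: pdf d1} and \eqref{eq: pdf dd}, so it earns its extra overhead by setting up that later development. Your closing remark correctly identifies the role of the regularity hypothesis: it is exactly what lets one use a true inverse rather than a generalised one, and this is the same point the paper makes when it notes that the sets $F_\nu^{-1}(z)$ are singletons.
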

\begin{proof}
Since $\nu$ admits a \ac{pdf} $p_\nu$ on $(a,b)$, the fundamental theorem of calculus implies that $F_\nu$ is differentiable with $DF_\nu(\theta) = p_\nu(\theta)$.
In particular, since $p_\nu > 0$ we have that $F_\nu$ is continuous and strictly increasing and therefore the sets $F_\nu^{-1}(z)$ are singletons for all $z \in [0,1]$.
Let $X \sim \nu$ and $Z \defeq F_\nu(X)$.
Then, from the change of variables formula, $Z$ admits a \ac{pdf} $q(z)$ on $[0,1]$ with
\[
q(z) = \sum_{\theta : F_\nu(\theta) = z} p_\nu(\theta) |DF_\nu(\theta)|^{-1} = p_\nu(\theta) \frac{1}{p_\nu(\theta)} = 1 ,
\]
which is indeed the \ac{pdf} of $\mathcal{U}(0,1)$.
\end{proof}

\noindent The random variable $F_\nu(X)$ is sometimes called the \textit{probability integral transform}; see e.g. \cite{Dawid1984,diebold1997evaluating}.
When $\Theta\not\subset\reals$, the \ac{cdf} of a distribution $\nu \in \mathcal{P}_r(\Theta)$ is not in general well-defined.
To characterise such distributions analogously to the above, consider a set of test functions of the form $f \colon \Theta \to (a,b)$, with the property that each univariate marginal $f_\# \nu$ \emph{does} admit an invertible \ac{cdf}.
We next establish that regular distributions are characterised by a certain (large) set of such statistics.


\begin{definition}[Test Functions $\mathcal{F}_\Theta$]
Consider measurable functions of the form $f \colon \Theta \to (a,b)$ for some $-\infty \leq a < b \leq \infty$ .
Then the \emph{test functions} $\mathcal{F}_\Theta$ are the set of all such $f$ for which $f_\#\nu \in \mathcal{P}_r((a,b))$ whenever $\nu \in \mathcal{P}_r(\Theta)$.
\end{definition}

\noindent Intuitively, $\mathcal{F}_\Theta$ rules out functions $f$ that take a constant value on a non-null set, in order to avoid the situation where $f_\# \nu$ contains an atom and the \ac{cdf} $F_{f_\# \nu} \colon (a,b) \to [0,1]$ is not invertible.
In the univariate case $\Theta = \reals$, the set $\mathcal{F}_\Theta$ contains functions $f$ for which the gradient exists and is nonzero almost everywhere and, moreover, the preimages $f^{-1}(z)$ have cardinality $n$ such that $0 < n < \infty$ for each $z \in (a,b)$.
Indeed, in this case $f_\# \nu$ admits an everywhere positive (Lebesgue) \ac{pdf} on $(a,b)$ of the form
\begin{equation}
p_{f_\# \nu}(z) = \sum_{\theta \in f^{-1}(z)} p_\nu(\theta) \left|Df(\theta) \right|^{-1} . \label{eq: pdf d1}
\end{equation}
Since by assumption $f_\# \nu$ is regular on $(a,b)$, from \Cref{lem: reg to uniform} we have that $F_{f_\# \nu}(f(\theta)) \sim \mathcal{U}(0,1)$ whenever $X \sim \nu$.
For the multivariate case $\Theta = \reals^d$, by the co-area formula the (Lebesgue) \ac{pdf} of $f_\#\nu$ is
\begin{equation}
p_{f_\# \nu}(z) = \int_{f^{-1}(z)} p_\nu(\theta) |\text{det}(Df(\theta)Df(\theta)\tr)|^{-\frac{1}{2}} \,\mathcal{H}^{d-1}(\rd \theta) , \label{eq: pdf dd}
\end{equation}
where $\mathcal{H}^{d-1}$ indicates the $(d-1)$ dimensional Hausdorff measure on $\Theta$ \citep[Proposition 2]{diaconis2013sampling}.
In this case, the requirement on the Jacobian determinant is that $\text{det}(Df Df\tr) \neq 0$ almost everywhere.
As $\mathcal{H}^{0}$ is equivalent to the counting measure, \eqref{eq: pdf dd} collapses back to \eqref{eq: pdf d1} when $d = 1$.

The restriction of attention to $\mathcal{F}_\Theta$ is essentially without loss of generality, as evidenced by the following result, whose proof is contained in \Cref{sec:proof:weak_control}:

\begin{lemma}[Regular Distributions are Characterised by $\mathcal{F}_\Theta$] \label{lem:weak_control}
Let $\Theta = \reals^d$ for some $d \in \naturals$.
Suppose that $\mu,\nu \in \mathcal{P}_r(\Theta)$ and $\int f \, \rd \mu = \int f \, \rd \nu$ for all $f \in \mathcal{F}_\Theta$.
Then $\mu = \nu$.
\end{lemma}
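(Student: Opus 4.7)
The plan is to reduce the $d$-dimensional problem to a family of one-dimensional ones via the Cram\'er--Wold device. For each non-zero direction $a \in \reals^d$, I will recover the univariate pushforward $\mu_a \defeq \langle a,\cdot\rangle_\# \mu$ from integrals of carefully chosen $f \in \mathcal{F}_\Theta$; equality $\mu_a = \nu_a$ for every such $a$ then forces $\mu = \nu$.

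The first step is to exhibit enough test functions. For any $a \in \reals^d \setminus \{0\}$ and any smooth, strictly increasing bounded $g \colon \reals \to (\alpha,\beta)$ with $g' > 0$ and $g'$ bounded, set $f \defeq g \circ \langle a,\cdot\rangle$. A direct computation using the co-area formula \eqref{eq: pdf dd}, together with the observation that $Df(\theta) = g'(\langle a,\theta\rangle)\, a\tr$ has constant norm $g'(g^{-1}(z)) \|a\|$ along each level set $H_z \defeq \{\theta \in \reals^d : \langle a,\theta\rangle = g^{-1}(z)\}$, gives
\begin{equation*}
p_{f_\# \nu}(z) \;=\; \frac{1}{g'(g^{-1}(z))\,\|a\|}\int_{H_z} p_\nu(\theta)\,\mathcal{H}^{d-1}(\rd \theta),
\end{equation*}
which is strictly positive for every $z \in (\alpha,\beta)$ because $p_\nu > 0$ everywhere and $H_z$ is an affine hyperplane in $\reals^d$. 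Hence $f \in \mathcal{F}_\Theta$.

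Second, I use these test functions to recover the CDFs of the projections. Fix $c \in \reals$ and let $g_n(u) \defeq \sigma(n(u-c))$ for the standard logistic sigmoid $\sigma$; then $f_n \defeq g_n \circ \langle a,\cdot\rangle \in \mathcal{F}_\Theta$ by the previous step. As $n \to \infty$, $f_n(\theta) \to \mathbf{1}_{\{\langle a,\theta\rangle > c\}}(\theta)$ pointwise off the hyperplane $\{\theta : \langle a,\theta\rangle = c\}$, which is Lebesgue-null and therefore $\mu$- and $\nu$-null by regularity of $\mu$ and $\nu$. Since $0 \leq f_n \leq 1$, bounded convergence yields
\begin{equation*}
\mu\bigl(\{\langle a,\cdot\rangle > c\}\bigr) \;=\; \lim_{n \to \infty} \int f_n \,\rd \mu \;=\; \lim_{n \to \infty} \int f_n \,\rd \nu \;=\; \nu\bigl(\{\langle a,\cdot\rangle > c\}\bigr),
\end{equation*}
so $\mu_a$ and $\nu_a$ have identical CDFs and are therefore equal. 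Ranging over all non-zero $a \in \reals^d$ and invoking the Cram\'er--Wold theorem then yields $\mu = \nu$.

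The main obstacle is the first step, namely confirming that the candidate functions indeed lie in $\mathcal{F}_\Theta$: this needs a careful application of the co-area formula on each affine level set together with the full strength of the hypothesis $p_\nu > 0$ on all of $\reals^d$ (not merely almost everywhere), which is what delivers strict positivity of the pushforward density on $(\alpha,\beta)$. Once this technical point is settled, the approximation-of-indicators argument and the Cram\'er--Wold conclusion are routine.
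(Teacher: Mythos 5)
Your proof is correct, but it takes a genuinely different route from the paper's. The paper argues by contradiction: if $\mu \neq \nu$ then, because the Kolmogorov distance is a metric on $\mathcal{P}(\reals^d)$, some orthant $(-\infty,x^{\ast}]$ receives different mass under $\mu$ and $\nu$; the indicator of that orthant is then approximated in $L^1(\mu)$ and $L^1(\nu)$ by the product of coordinatewise sigmoids $f_{x^{\ast}}^{(c)}(x) = \prod_{i=1}^d \bigl(1+e^{2c(x_i-x_i^{\ast})}\bigr)^{-1} \in \mathcal{F}_\Theta$, and a reverse-triangle-inequality estimate shows that for $c$ large enough the integrals of $f_{x^{\ast}}^{(c)}$ against $\mu$ and $\nu$ must differ. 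You instead approximate indicators of half-spaces $\{\langle a,\cdot\rangle > c\}$ by sigmoids of linear functionals, recover every one-dimensional projection of $\mu$ and $\nu$, and conclude by Cram\'er--Wold. The two arguments are parallel in spirit --- both replace the indicator of a set from a determining class (orthants versus half-spaces) by smooth elements of $\mathcal{F}_\Theta$ and pass to the limit by dominated convergence --- but the supporting facts differ (the metric property of $d_{\text{K}}$ versus Cram\'er--Wold), and your choice of test functions has the advantage that membership in $\mathcal{F}_\Theta$ is verified explicitly via the co-area formula, since the level sets are affine hyperplanes on which the Jacobian factor is constant and $p_\nu>0$ forces strict positivity of the pushforward density; the paper simply asserts $f_{x^{\ast}}^{(c)} \in \mathcal{F}_\Theta$ without this check. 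Your direct limit argument also dispenses with the paper's quantitative $\varepsilon/2$ bookkeeping, at the mild cost of invoking Cram\'er--Wold as an external result.
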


Now we have the mathematical tools to define what it means for a learning procedure to be calibrated.
In \Cref{subsec: strong calib} we introduce a strong notion of calibration, which clarifies the sense in which the true parameter can be considered plausible as a sample from the distributional output.
Then, in \Cref{subsec: weak calib}, we consider a strictly weaker notion of calibration that is more easily tested.

\subsection{Strongly Calibrated Learning Procedures} \label{subsec: strong calib}



To assess whether a learning procedure is calibrated we must specify what it is calibrated \textit{against}, and this requires a data-generating model.
Thus, the assessment framework we present exists in the \emph{M-complete} setting \citep[\S6.1.2]{Bernardo2000}.

\begin{definition}[Data-Generating Model]
A \emph{data-generating model} is a function
\begin{align*}
P \colon \Theta & \to \mathcal{P}(Y) \\
\theta & \mapsto P_\theta ,
\end{align*}
where $P_\theta$ carries the interpretation of a statistical model from which data are generated.
\end{definition}

In this section we present a strong notion of what it means for a learning procedure to be calibrated to a data-generating model.
It simplifies matters to restrict to learning procedures that produce regular distributional output:

\begin{definition}[Regular Learning Procedure] \label{def:reg_learning_proc}
A learning procedure $\mu \colon \mathcal{P}(\Theta) \times Y \to \mathcal{P}(\Theta)$ is \emph{regular} if $\mu(\mu_0,y) \in \mathcal{P}_r(\Theta)$ for all $\mu_0 \in \mathcal{P}_r(\Theta)$ and all $y \in Y$.
\end{definition}

\begin{definition}[Strongly Calibrated] \label{def: strong}
Let $B \subseteq \mathcal{P}_r(\Theta)$ denote a set of belief distributions and $P$ a data-generating model.
A regular learning procedure $\mu$ is said to be \emph{strongly calibrated} to $(B, P)$ if
\begin{align}
\left.
\begin{array}{rl}
\theta & \sim \mu_0 \\
y \mid \theta & \sim P_\theta
\end{array}
\right\} \implies F_{f_\# \mu(\mu_0,y)}(f(\theta)) \sim \mathcal{U}(0,1)  \label{eq: strong calib}
\end{align}
for all $f \in \mathcal{F}_\Theta$ and for all $\mu_0 \in B$.
If the set $B$ contains a single element, $\mu_0$, then we say simply that $\mu$ is strongly calibrated to $(\mu_0,P)$.
\end{definition}

The assumption that both the belief distribution and learning procedure are regular excludes some important learning procedures.
For example, in Bayesian inference one sometimes uses an improper, ``uninformative'' prior such as $p(\theta) \propto 1$, which would not be regular unless $\Theta_0$ is bounded.
To study such a learning procedure in the framework of \cref{def: strong} one could consider constructing an ``artificial'' learning procedure that took a regular distribution $\mu_0$ as input, but ignored this for the purposes of inference and instead used an improper prior---though, one would still need to ensure that the learning procedure itself returned a regular output, which is not guaranteed for an improper prior.
In addition to this, any application of Bayesian inference for which the support of the posterior is a strict subset of $\Theta$ (e.g.\ procedures with truncated likelihoods) will fail to be regular.
The distributional output of \ac{abc} may not be regular for similar reasons.
This motivates the introduction of \emph{weakly calibrated} learning procedures in \cref{subsec: weak calib}, for which the regularity assumption can be relaxed.

To gain intuition for \cref{def: strong}, notice that the unknown data-generating parameter $\theta$ is statistically identical to a sample from the distributional output $\mu(\mu_0,y)$ when the learning procedure is strongly calibrated.
This intuition is clarified in the following remark:

\begin{remark}[Correct Coverage for Credible Sets]
Suppose that the learning procedure $\mu$ is strongly calibrated to $(\mu_0,P)$.
If the distribution $\mu(\mu_0,y)$ is used to construct a $1-\alpha$ probability credible set for $\theta$, then this interval will indeed contain $\theta$ with probability $1-\alpha$ under the hierarchical data-generating model $\theta \sim \mu_0$, $y \mid \theta \sim P_\theta$.
\end{remark}

\noindent Thus, the distributional output from a strongly calibrated learning procedure can be meaningfully related to the parameter inference task.
Note, however, that even a small degree of misspecification can lead to failure of calibration.
Thus strong calibration captures the absence of systematic errors, similar to the notion of an unbiased estimator. 

Next we present an actionable test for the hypothesis that a learning procedure is strongly calibrated.
We emphasise that this test can in theory be applied to \emph{any} learning procedure (i.e.\ any workflow used for parameter inference that returns distributional output), providing that the regularity requirements are satisfied and that one is able to simulate from the data-generating model.

\begin{remark}[Testing whether a Learning Procedure is Strongly Calibrated] \label{rem: GOF}
Fix $\mu_0 \in \mathcal{P}_r(\Theta)$ and let
\begin{align*}
\theta_i & \stackrel{\text{\ac{iid}}}{\sim} \mu_0 \\
y_i \mid \theta_i & \stackrel{\text{\ac{iid}}}{\sim} P_{\theta_i}
\end{align*}
Then we can test whether a (regular) learning procedure $\mu$ is strongly calibrated to $(\mu_0,P)$ by picking a test function $f \in \mathcal{F}_\Theta$ and using any goodness-of-fit test for the hypothesis
\begin{align*}
F_{f_\# \mu(\mu_0,y_i)}(f(\theta_i)) \stackrel{\text{\ac{iid}}}{\sim} \mathcal{U}(0,1) .
\end{align*}
Such a test will not have power against all alternatives unless, for example, $d=1$ and $f(\theta) = \theta$.
To increase the power of the test in higher dimensions, multiple $f$ should be simultaneously considered.
Methodology for selecting a suitable test function is proposed in \Cref{subsec: data-driven}. 
\end{remark}

\begin{remark}
For simplicity we have assumed that each $\theta_i$ is associated with exactly one $y_i$.
In practice this need not be the case; each parameter could be associated with many pieces of data.
For example in some applications a sample from $\mu_0$ may be more difficult to obtain than repeated measurements $y_1, \dots, y_n \sim P_{\theta_i}$.
However we note that this will violate the independence assumption in \cref{rem: GOF}, and would require a more complicated test to be used.
\end{remark}

\begin{remark}[Quantification of Strong Calibration] \label{rem: quant strong}
The departure from uniformity of the law of $F_{f_\# \mu(\mu_0,y)}(f(\theta))$ under $\theta \sim \mu_0$, $y \mid \theta \sim P_\theta$ can be used to assess the nature and extent to which the learning procedure fails to be strongly calibrated.
Histograms can provide an intuitive visualisation; see \Cref{subsec: ODE calib}.
\end{remark}

In the next section we illustrate \Cref{def: strong} with some examples for which strong calibration can be verified.
Then, in \Cref{subsubsec: strong existing} we discuss the relationship between \Cref{def: strong} and earlier work.

\subsubsection{Examples of Strongly Calibrated Learning Procedures}
\label{subsubsec: examples strong}

Our first example confirms the intuition that the Bayesian framework is strongly calibrated to the prior and the data-generating model.

\begin{example}[Bayes is Strongly Calibrated] \label{ex: Bayes strong}
If $\theta \sim \mu_0$ and $y \mid \theta \sim P_\theta$ then $(\theta,y)$ can be considered to be a sample from the joint distribution of the parameters and dataset.
In the Bayesian framework (with the data-generating model $P$ correctly specified), $\mu(\mu_0,y)$ is defined as the conditional distribution of the parameters given the data, and thus $\theta \mid y \sim \mu(\mu_0,y)$.
Thus if $\mu_0$ and $\mu$ are regular, it follows from \Cref{lem: reg to uniform} that $F_{f_\# \mu(\mu_0,y)}(f(\theta)) \sim \mathcal{U}(0,1)$ for all $f \in \mathcal{F}_\Theta$.
Thus Bayesian inference is strongly calibrated to $(\mathcal{P}_r(\Theta), P)$.
\end{example}

The following example\footnote{This example is similar in spirit to the \emph{climatological forecaster} in Example 2 of \citet{gneiting2007probabilistic}, who uses only historical frequencies to predict tomorrow's weather, agnostic of any recent data that may have been obtained.} shows that strongly calibrated learning procedures do not necessarily yield accurate estimators:

\begin{example}[Data-Agnostic Learning Procedure is Strongly Calibrated] \label{eg:trivial strong}
The trivial learning procedure that takes $\mu(\mu_0,y) \defeq \mu_0$ is strongly calibrated to $(\mathcal{P}_r(\Theta), P)$, since for $\theta \sim \mu_0$ and $\mu_0 \in \mathcal{P}_r(\Theta)$,
\[
F_{f_\# \mu(\mu_0,y)}(f(\theta)) = F_{f_\# \mu_0}(f(\theta)) \sim \mathcal{U}(0,1)
\]
for all $f \in \mathcal{F}_\Theta$.
\end{example}

\noindent The implication of \Cref{eg:trivial strong} is that strong calibration alone is not sufficient to justify the practical application of a learning procedure, and additional \textit{desiderata}, such as \textit{statistical efficiency}, will typically also need to be taken into account.\footnote{For example, ``\textit{maximizing the sharpness of
the predictive distributions subject to calibration}'' was proposed in \citet{gneiting2007probabilistic}, although their use of the term ``calibration'' is distinct from the present paper, being focussed on forecast assessment. See \Cref{subsubsec: strong existing} for further discussion of the literature on forecast assessment.}
This paper focusses on calibration and does not attempt to discuss other \textit{desiderata} and how they should be balanced in the applied context.

One can consider situations between the two extremes of \Cref{ex: Bayes strong} and \Cref{eg:trivial strong}:

\begin{example}[Partial Posteriors are Strongly Calibrated] \label{ex: partial strong}
A \emph{partial posterior} corresponds to performing full Bayesian inference using only summary statistics $s \colon Y \to S$ of the dataset.
These have recently been proposed as a tool for compensating for model misspecification \citep{Lewis2021}.
For the partial posterior learning procedure, $\mu(\mu_0,y)$ is the conditional distribution of the parameters given the summarised data $s(y)$ and $\theta \mid s(y) \sim \mu(\mu_0,y)$.
When both the prior and the partial posterior learning procedure are regular, it follows from \Cref{lem: reg to uniform} that $F_{f_\# \mu(\mu_0,y)}(f(\theta)) \sim \mathcal{U}(0,1)$ for all $f \in \mathcal{F}_\Theta$.
Thus partial posteriors are strongly calibrated to $(\mathcal{P}_r(\Theta), P)$.
\end{example}

Next we present an example that is a clear departure from the Bayesian framework, in that it clearly does not return a posterior distribution and yet is provably strongly calibrated:

\begin{example}[Probabilistic Stationary Iterative Methods are Strongly Calibrated] \label{ex: iterative methods}
Let $\Theta = \reals^d$ and consider the data-generating model $P_\theta = \delta(A \theta)$ that returns a Dirac distribution on $y = A \theta$, where $A$ is a non-singular matrix.
An ideal learning procedure would return $\mu(\mu_0,y) = \delta(A^{-1} y) = \delta(\theta)$, but in many practical scenarios the exact action of $A^{-1}$ on $y$ cannot be computed, either due to poor conditioning of the matrix $A$ or due to the $O(d^3)$ computational cost associated with inverting $A$.
This motivates the use of an alternative procedure, called a \emph{probabilistic iterative method}, recently proposed in \cite{Cockayne2020} and based on classical iterative methods for solving linear systems \citep[see e.g.~][]{Saad2003}.
To describe the procedure, let $R^y: \Theta \to \Theta$ be an map, constructed using $y$, such that $\theta$ is a solution of the fixed point equation $\theta = R^y(\theta)$.
For example, the choice $R^y(\theta) = (I - \epsilon A) \theta + \epsilon y$, $\epsilon > 0$, corresponds to a classical iterative method called \textit{Richardson's method}.
Consider then the learning procedure $\mu(\mu_0,y) \defeq R^y_\# \mu_0$, whose output is conjugate under a Gaussian input $\mu_0$, being an affine transform, and can be exactly computed at cost $O(d^2)$.
\cite{Cockayne2020} proved that, under mild conditions, the iterative application of $R^y$ produces a sequence of distributions on $\Theta$ that contract to $\delta(\theta)$, and that this procedure is strongly calibrated to $(G(\reals^d), P)$, where $G$ is the set of all Gaussian distributions supported on $\reals^d$.
This example speaks to one potential use of \Cref{def: strong}, in providing theoretical justification for non-traditional learning procedures which nevertheless produce meaningful distributional output.
\end{example}

Next our attention turns to the relationship between \Cref{def: strong} and existing concepts in the literature.

\subsubsection{Relation to Existing Concepts} \label{subsubsec: strong existing}

Here we compare and contrast our notion of strong calibration with concepts appearing in earlier work and in related fields.

\paragraph{Frequentist Coverage:}

There is a rich literature that aims to assess learning procedures according to frequentist \textit{desiderata}.
In particular, one can ask whether credible sets have \textit{correct frequentist coverage}, which is analogous to fixing $\theta = \theta_0$ and asking if $y \sim P_{\theta_0}$ implies $F_{f_\# \mu(\mu_0,y)}(f(\theta_0)) \sim \mathcal{U}(0,1)$; i.e.\ the only randomness is introduced during generation of the dataset.
This differs to our notion of strong calibration in that we sample $\theta$ from $\mu_0$ while, in the frequentist assessment, $\theta$ is fixed.
In particular, it is possible to prove certain learning procedures are strongly calibrated, but no learning procedure can be expected to attain correct frequentist coverage in general.
The literature on frequentist assessment therefore focuses on weaker notions of coverage, such as \textit{asymptotically correct frequentist coverage}, where the data are of the form $y = (y_1,\dots,y_n)$ and credible sets are required to have correct frequentist coverage in the $n \to \infty$ limit.
In finite-dimensional Bayesian analyses where a Bernstein--von--Mises theorem holds, asymptotically correct frequentist coverage is guaranteed \citep{freedman1999wald}.
Results on frequentist coverage have also been established in finite dimensions for variational Bayes \citep{Wang2019}.
In infinite-dimensional settings, a Bayesian learning procedure can fail to have even asymptotically correct frequentist coverage \citep{cox1993analysis,freedman1999wald}.
An active area of research is to establish sufficient conditions for asymptotically correct frequentist coverage, and recent results have been established that hold uniformly over a set of values for $\theta_0$; for results in this direction see \cite{Szabo2015} and references therein.

\paragraph{Forecast Assessment:}

\citet{Dawid1984} refers to the question of whether a probabilistic forecasting system is in some sense ``good'' as ``\textit{the fundamental question of prequential statistics}''.
Our notion of strong calibration is closely related to a concept developed in that literature to answer this question, for which the term \emph{probabilistic calibration} is used  \citep{dawid1982well,diebold1997evaluating,gneiting2007probabilistic,gneiting2013combining}.
An important distinction between forecast assessment and the present paper is the sense in which probabilistic calibration is applied; here we estimate a ``true'' parameter $\theta$, which is not a random variable, whereas in forecast assessment there remains inherent randomness in the quantities being predicted.

In the econometrics literature, \citet{diebold1997evaluating} considered a sequence of forecasts $(Q_i)_{i=1}^n \subset \mathcal{P}_r(\mathbb{R})$, representing predictions for corresponding quantities $(q_i)_{i=1}^n \subset \mathbb{R}$.
The authors advocated a visual diagnostic, called a \emph{correlogram}, to assess whether $\{F_{Q_i}(q_i)\}_{i=1}^n$ are plausible as an independent random sample from $\mathcal{U}(0,1)$; see also \cite{christoffersen1998evaluating,berkowitz2001testing}.
In the statistics community, \citet{gneiting2013combining} proposed to compare the variance of the $\{F_{Q_i}(q_i)\}_{i=1}^n$ to $1/12$, the variance of a $\mathcal{U}(0,1)$ random variable, with the sequence of forecasts being called \emph{overdispersed} if this variance is smaller  than 1/12, and \emph{underdispersed} if it is larger; see the review of \citet{gneiting2014probabilistic}.
This literature contains elements that are similar in spirit to our notion of strong calibration, except that a parametric statistical model is not explicitly involved; an important distinction that we require when assessing whether a learning procedure is calibrated.

In the meteorology literature, the calibration of probabilistic forecasts is routinely assessed using \emph{rank histograms} \citep{anderson1996method,talagrand1997evaluation,hamill1997verification,hamill2001interpretation}. For computational reasons, a forecast is typically represented by a discrete distribution $\mu(\mu_0,y) \approx \frac{1}{M} \sum_{m=1}^M \delta(\theta^m)$, produced based on initial belief $\mu_0$ and after observing data $y$, assumed to have arisen from a data-generating model $P$.
To assess the forecast, an ensemble of synthetic datasets $\lbrace y^{m} \rbrace_{m=1}^M$ is simulated as $y^m \sim P_{\theta^m}$.
For a test function $f \in \mathcal{F}_Y$, the rank statistic
\[
r(\lbrace f(y^{m}) \rbrace_{m=1}^M, f(y)) \defeq \sum_{m=1}^M \mathbb{I}[f(y^{m}) < f(y)]
\]
will be uniformly distributed on $\lbrace 0, 1, \dots, M \rbrace$ if the forecast is calibrated.
This is assessed empirically by producing a histogram of rank statistics for a collection of $T$ ensembles of synthetic datasets $\lbrace \lbrace y_t^m \rbrace_{m=1}^M\rbrace_{t=1}^T $ and corresponding real datasets $\lbrace y_t \rbrace_{t=1}^T$, where $t = 1, \dots, T$ may index distinct times, spatial locations, or both.
Denoting the empirical measure associated with an ensemble of synthetic datasets as $\hat{\nu}_t = \tfrac{1}{M}\sum_{m=1}^M\delta(y^m_t)$, the rank statistic $r(\lbrace f(y^m_t) \rbrace_{m=1}^M, f(y_t))$ is related to the \ac{cdf} of $\hat{\nu}_t$ by
\[
F_{f_\#\hat{\nu}_t}(f(y_t)) = \frac{1}{M}r(\lbrace f(y^m_t) \rbrace_{m=1}^M, f(y_t)).
\]
Checking for rank histogram uniformity is therefore similar in spirit to the test for strong calibration in \Cref{rem: GOF}, with relaxations to allow for the fact that the learning procedure produces an empirical distribution output and that the true parameters $\{\theta_t\}_{t=1}^T$ that gave rise to the real datasets $\{y_t\}_{t=1}^T$ are unknown, so that testing occurs in the data domain $Y$ rather than in the parameter domain $\Theta$.



\paragraph{Signal Processing:}

An important goal in signal processing is to estimate a time-dependent latent state $\lbrace\theta_t\rbrace_{t=1}^T$, $\theta_t \in \reals^d$, based on time-series data $\lbrace y_t \rbrace_{t=1}^T$.
For Gaussian filtering algorithms, such as the extended Kalman filter \citep[see][p84]{law2015data}, the output of the learning procedure is a sequence of Gaussian distributions $\mathcal{N}(m_t, \Sigma_t)$.
These serve to quantify uncertainty as to the unknown value of the parameter $\theta_t$, $t = 1,\dots,T$.
Such a filtering algorithm is considered to be calibrated if the \emph{Z-score} $\Sigma_t^{-1/2}(\theta_t - m_t)$ is plausible as a sample from $\mathcal{N}(0,1)$.
The \ac{anees} \citep{bar1983consistency,drummond1998comparison}
\[
\frac{1}{T} \sum_{t=1}^T (\theta_t - m_t)^\top \Sigma_t^{-1} (\theta_t - m_t)
\]
attempts to quantify this property, with values of \ac{anees} close to 1 when the learning procedure is calibrated.
\cite{li2002estimator} argued against the use of \ac{anees} on the grounds that it ``\textit{penalises optimism much more severely than pessimism}''.\footnote{It is unclear to us whether this is a problem, since in most statistical applications estimates that are conservative are generally preferred to estimates that are over-confident.}
These authors then proposed the \ac{nci}
\[
\frac{10}{T} \sum_{t=1}^T \log_{10}\left( \frac{(\theta_t - m_t)^\top \Sigma_t^{-1} (\theta_t - m_t)}{(\theta_t - m_t)^\top \bar{\Sigma}_t^{-1} (\theta_t - m_t)} \right)
\]
where $\bar{\Sigma}_t$ is the covariance matrix of the random vector $\theta_t - m_t$, where the randomness here refers to the generation of the dataset.
The \ac{nci}, which is also called the \emph{inclusion indicator} in \cite{Li2006}, takes values close to 0 if the filtering algorithm is calibrated and is quite widely used \citep[e.g.][]{pruher2020improved}.
Further discussion can be found in \cite{li2011evaluation}.
The \ac{anees} is similar in spirit to our \Cref{def: strong}, but it is adapted to learning procedures that produce Gaussian output and to a temporal data-generating model.

\paragraph{Validation of Algorithms for Bayesian Computation:}

\cite{Cook2006} observed that Bayesian inference is strongly calibrated to the prior and the data-generating model\footnote{Though, the result was not described in such terms in that work.} and presented the argument used in \Cref{ex: Bayes strong}.
Their interest was in validating software for Bayesian inference, and general learning procedures were not considered.
They proposed a goodness-of-fit test for the case $\Theta = \reals^d$ that corresponds to \Cref{rem: GOF}, using a test statistic of the form
\begin{equation}
T \defeq \sum_{i=1}^n (F_{\mathcal{N}(0,1)}^{-1}(F_{f_\#\mu(\mu_0,y_i)}(f(\theta_i))))^2 \label{eq: cook}
\end{equation}
for some $f \in \mathcal{F}_\Theta$.
If the null hypothesis holds and the learning procedure is strongly calibrated, then $T \sim \chi_n^2$.
\cite{Cook2006} focused on software that uses \ac{mcmc}, meaning that \ac{cdf}s are not exactly computed, and advocated an empirical approximation to the \ac{cdf} based on approximate samples $\lbrace \theta_i^m \rbrace_{m=1}^M$ from $\mu(\mu_0,y_i)$ generated using \ac{mcmc}.

A similar approach was used to analyse \ac{abc} in \cite{wegmann2009efficient}, who performed a \ac{ks} test for uniformity, and in \cite{prangle2014diagnostic} who used the name \textit{coverage property} and advocated a visual diagnostic plot.
In more recent work, \cite{lee2019calibration,xing2019calibrated} proposed the use of credible sets to circumvent access to \ac{cdf}s; this is similar in spirit to taking $f$ to be an indicator function in \Cref{def: strong}.
In \cite{Talts2018} the authors modified the approach of \cite{Cook2006} to address issues surrounding empirical approximation of the \ac{cdf}, such as discretisation artefacts when displayed as a histogram if an appropriate continuity correction or binning scheme is not used.
\citet{Talts2018} showed that, for \ac{iid} samples $\lbrace \theta_i^m \rbrace_{m=1}^M$ from the posterior given $y_i$, rank statistics $r(\lbrace f(\theta^m_i) \rbrace_{m=1}^M, f(\theta_i))$ for a test function $f \in \mathcal{F}_\Theta$ will follow a discrete uniform distribution on $\lbrace 0,1, \dots M \rbrace$, and proposed to use this to test calibration rather than checking the (continuous) uniformity of estimated quantiles. Further, \citet{Talts2018} proposed to alleviate departures from uniformity in the rank statistics arising from the use of dependent \ac{mcmc} rather than \ac{iid} samples by thinning the \ac{mcmc} samples using a heuristic based on the estimated chain autocorrelations.


\paragraph{Validation of Bayesian Workflows:}
The aforementioned authors including \cite{Cook2006} focussed on the correctness of algorithms for Bayesian computation, but one can take a broader view in which a \textit{Bayesian workflow} (e.g. including prior elicitation, selection of a likelihood, and so forth; see \cite{Gelman2020}), also form part of the learning procedure to be assessed.
The earliest related work in this direction of which we are aware is \cite{monahan1992proper}, who stated a definition similar to our strong calibration (albeit in terms of credible sets).
These authors considered generalised Bayesian inference and provided the argument used in \Cref{ex: partial strong}.
A \ac{ks} test for uniformity of $F_{\mu(\mu_0,y_i)}(\theta_i)$ was proposed in the case where $\Theta$ is one-dimensional.

\cite{harrison2015validation} proposed a notion of calibration that is similar in spirit to our \Cref{def: strong}, motivated by the often challenging computational workflows encountered in applications to astronomy.
First, the authors take a collection of candidate values $\theta_i$ for the parameter and generate associated datasets $y_i \mid \theta_i \stackrel{\text{\ac{iid}}}{\sim} P_{\theta_i}$.
The values $\theta_i$ ``\textit{may be the same for each simulation
generated or differ between them, depending on the nature of the inference problem}''.
Then, recasting into our notation, these authors proposed to ``\textit{test the null hypothesis that each set of assumed parameter values $\theta_i$ is drawn from the corresponding derived posterior $\mu(\mu_0,y)$}''.
This procedure coincides with our notion of strong calibration only if $\theta_i \stackrel{\text{\ac{iid}}}{\sim} \mu_0$.
The authors considered Bayesian workflows (``\textit{our validation procedure [...] allows for the verification of the implementation and any simplifying assumptions of the data model}'') and proposed a ``\textit{multiple simultaneous version of [a novel, multi-dimensional] Kolmogorov--Smirnov test}'' for the calibrated null hypothesis.
This \ac{mks} test provides an ingenious way to circumvent the selection of a test function $f$ in \Cref{rem: GOF}, being based on highest probability density regions instead of \ac{cdf}s.
However, the \ac{mks} test does not have power against all alternatives to the calibrated null hypothesis, even in dimension $d = 1$, and the description of the test as a multi-dimensional \ac{ks} test is misleading, as when $d=1$ the test does not correspond to a standard \ac{ks} test.

\paragraph{Summary:}

In summary, the content of \Cref{subsec: notation,subsec: charac,subsec: strong calib,subsubsec: examples strong} departs from existing work on this topic in that:
\begin{enumerate}
\item where similar hypothesis tests have been performed in \cite{monahan1992proper,Cook2006,harrison2015validation}, they were used only to verify the correctness of algorithms and/or workflows for some form of Bayesian computation, while we proposed a notion of strong calibration that is ambivalent to any particular statistical framework;
\item \Cref{def: strong} is sufficiently precise to allow for logical deduction, such as proving the strong calibration property holds for a non-traditional learning procedure such as that in \Cref{ex: iterative methods}.
\end{enumerate}

\vspace{5pt}

The main drawback with \Cref{def: strong} appears to be practical, since testing for strong calibration in principle requires access to the \ac{cdf} of $f_\#\mu(\mu_0, y)$ for at least one test function $f \in \mathcal{F}_\Theta$.
In some cases the \ac{cdf} will be explicitly available or easily approximated, but in other cases it will not.
Therefore, in the next section we propose a second, strictly weaker notion of calibration which can be tested without access to the \ac{cdf}.

\subsection{Weakly Calibrated Learning Procedures} \label{subsec: weak calib}

Testing whether a learning procedure is strongly calibrated may be challenging in practice.
Furthermore, as discussed in \cref{subsec: strong calib}, the requirement that both $\mu_0$ and the learning procedure are regular in the sense of \Cref{def: reg def,def:reg_learning_proc} will often be too strong, given the diverse algorithms for uncertainty quantification that have been proposed in literature.
We therefore propose a second, weaker definition that requires neither additional structure to define a \ac{cdf} nor regularity of the distributions involved:

\begin{definition}[Weakly Calibrated] \label{def:calibrated}
Let $B \subseteq \mathcal{P}(\Theta)$ denote a set of belief distributions and $P$ a data-generating model.
A learning procedure $\mu$ is said to be \emph{weakly calibrated} to $(B, P)$ if either of the following equivalent properties hold:
\begin{enumerate}[label=(\roman*)]
	\item $\displaystyle \iint \mu(\mu_0,y) \, \wrt P_\theta (y) \, \wrt\mu_0(\theta) = \mu_0$. \label{def:calibrated:distributional}
	\item $\displaystyle \theta \mapsto \int \mu(\mu_0,y) \, \wrt P_{\theta}(y)$ is a $\mu_0$-invariant Markov kernel on $\Theta$.
	\label{def:calibrated:markov}
\end{enumerate}
for all $\mu_0 \in B$.
If the set $B$ contains a single element, $\mu_0$, we say simply that $\mu$ is weakly calibrated to $(\mu_0,P)$.
\end{definition}

\noindent To give some intuition, the definition \ref{def:calibrated:distributional} above states that if one randomises the true parameter according to $\theta \sim \mu_0$, generates synthetic data according to $y \sim P_\theta$, and then samples $\vartheta \sim \mu(\mu_0,y)$ from the distributional output, this should be identical in distribution to sampling $\vartheta$ from $\mu_0$ directly.
Similarly to \Cref{rem: quant strong}, one could consider quantifying departures from weak calibration in terms of a statistical divergence between the two measures appearing in \ref{def:calibrated:distributional}, but here we focus on testing for equality and quantitative descriptions will not be pursued.
Focussing on \ref{def:calibrated:markov}, note that a sufficient condition is provided by the \textit{detailed balance} condition \citep[Eq.~20.5 in][]{meyn2009markov}
\begin{equation}
\mu_0(\rd \theta) \int \mu(\mu_0,y)(\rd \vartheta) \, \rd P_\theta(y) = \mu_0(\rd \vartheta) \int \mu(\mu_0,y)(\rd \theta) \, \rd P_\vartheta(y), \qquad \forall \theta, \vartheta \in \Theta \label{eq: DB} .
\end{equation}
On the other hand, the existence of non-reversible Markov kernels that are $\mu_0$ invariant \citep[e.g.][]{bierkens2016non} demonstrates that \eqref{eq: DB} is not a necessary condition for \ref{def:calibrated:markov} to hold.

The main practical advantage of \Cref{def:calibrated} is that we may test whether a learning procedure is weakly calibrated without access to \ac{cdf}s of any univariate summary $f_\# \mu(\mu_0,y)$, $f \in \mathcal{F}_\Theta$:

\begin{remark}[Testing whether a Learning Procedure is Weakly Calibrated] \label{rem: GOF2}
Let $\mu_0 \in \mathcal{P}(\Theta)$ and let
\begin{align*}
\theta_i & \stackrel{\text{\ac{iid}}}{\sim} \mu_0 \\
y_i \mid \theta_i & \stackrel{\text{\ac{iid}}}{\sim} P_{\theta_i}  \\
\vartheta_i \mid \theta_i, y_i & \stackrel{\text{\ac{iid}}}{\sim} \mu(\mu_0,y_i) .
\end{align*}
Then weak calibration of a learning procedure $\mu$ to $(\mu_0,P)$ can be tested using any goodness-of-fit test for the null hypothesis that $\vartheta_i \stackrel{\text{\ac{iid}}}{\sim} \mu_0$.
Alternatively if $\mu_0$ and $\mu$ are each regular, one could instead test for weak calibration by picking one or more functions $f \in \mathcal{F}_\Theta$ and using any goodness-of-fit test for the null hypothesis 
\begin{align*}
F_{f_\# \mu_0}(f(\vartheta_i)) \stackrel{\text{\ac{iid}}}{\sim} \mathcal{U}(0,1) .
\end{align*}
This is of course equivalent to the procedure described in \Cref{rem: GOF2} provided a sufficiently large set of $f \in \mathcal{F}_r(\Theta)$ are used, but we write it in this way to draw a comparison with \Cref{rem: GOF}.
\end{remark}

\subsubsection{Examples of Weakly Calibrated Learning Procedures}
\label{subsubsec: examples weak}

A natural question is whether a learning procedure that is strongly calibrated to $(B, P)$ is also weakly calibrated to $(B,P)$, as the nomenclature suggests.
This is indeed the case, as stated below and proven in \Cref{sec:proof:S_implies_W}.

\begin{lemma}[Strongly Calibrated $\implies$ Weakly Calibrated] \label{lem:S_implies_W}
Let $\Theta = \reals^d$ for some $d \in \naturals$.
Suppose that $\mu$ is a regular learning procedure that is strongly calibrated to $(B,P)$, where $B \subseteq \mathcal{P}_r(\Theta)$ and $P$ is a data-generating model.
Then the learning procedure $\mu$ is also weakly calibrated to $(B,P)$.
\end{lemma}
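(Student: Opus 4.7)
The plan is to verify property \ref{def:calibrated:distributional} of \Cref{def:calibrated}, namely that for each $\mu_0 \in B$ the mixture
\[
\tilde\mu \defeq \iint \mu(\mu_0, y) \, \wrt P_\theta(y) \, \wrt \mu_0(\theta)
\]
coincides with $\mu_0$, by reducing the measure equality to a family of scalar identities indexed by the test-function class $\mathcal{F}_\Theta$ and appealing to \Cref{lem:weak_control}. First I would check that $\tilde\mu$ lies in $\mathcal{P}_r(\Theta)$: regularity of $\mu$ forces each $\mu(\mu_0,y)$ to have strictly positive density, so the mixture density $p_{\tilde\mu}(\vartheta) = \iint p_{\mu(\mu_0,y)}(\vartheta)\,\wrt P_\theta(y)\,\wrt\mu_0(\theta)$ is strictly positive; hence \Cref{lem:weak_control} can be invoked for the pair $(\tilde\mu,\mu_0)$.

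Given regularity of both measures, \Cref{lem:weak_control} tells us that it suffices to prove $\int f\,\wrt\tilde\mu = \int f\,\wrt\mu_0$ for every $f \in \mathcal{F}_\Theta$. Fix such an $f$ and augment the hierarchical model with a further draw $\vartheta\mid y \sim \mu(\mu_0,y)$, so that $\vartheta \sim \tilde\mu$ while $\theta \sim \mu_0$ marginally; the target becomes $\mathbb{E}[f(\vartheta)] = \mathbb{E}[f(\theta)]$. Strong calibration applied to $f$ gives that $U \defeq F_{f_\#\mu(\mu_0,y)}(f(\theta)) \sim \mathcal{U}(0,1)$ marginally, while \Cref{lem: reg to uniform} applied conditionally on $y$ (valid by regularity of $\mu(\mu_0,y)$) gives $V \defeq F_{f_\#\mu(\mu_0,y)}(f(\vartheta))$ with $V\mid y \sim \mathcal{U}(0,1)$, so that $V$ is marginally uniform and independent of $y$. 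Inverting the \ac{cdf} yields $f(\theta) = F_{f_\#\mu(\mu_0,y)}^{-1}(U)$ and $f(\vartheta) = F_{f_\#\mu(\mu_0,y)}^{-1}(V)$, exhibiting both quantities as the same random function of uniformly distributed arguments.

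The hard part will be bridging the asymmetry between $U$ and $V$: both are marginally uniform, but only $V$ is guaranteed to be independent of $y$, whereas $U$ can in general be correlated with $y$ through the hierarchical coupling. In the Bayesian special case (compare \Cref{ex: Bayes strong}), conditional uniformity of $U$ given $y$ also holds by Bayes's rule and the equality of expectations is immediate by tower-propagating through $y$; in the general case, strong calibration supplies only the weaker marginal statement and one needs to leverage the universal quantifier over $\mathcal{F}_\Theta$ in \Cref{def: strong}, combined with a second application of \Cref{lem:weak_control}, to extract the required marginal agreement of $f(\theta)$ and $f(\vartheta)$ from the collection of marginal uniformity statements indexed by $\mathcal{F}_\Theta$. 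This is where the richness of the test-function class $\mathcal{F}_\Theta$, established in \Cref{subsec: charac}, is essential to the argument.
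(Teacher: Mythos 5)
Your overall architecture --- first show that the data-averaged distribution $\tilde\mu$ is regular, then reduce the measure identity $\tilde\mu = \mu_0$ to a family of scalar statements indexed by $\mathcal{F}_\Theta$ --- matches the paper's, and your positivity argument for the mixture density is essentially the paper's first step. But the core of the proof is exactly the part you defer as ``the hard part,'' and the plan you sketch for it does not go through. The obstruction you correctly identify, namely that $U \defeq F_{f_\#\mu(\mu_0,y)}(f(\theta))$ is only \emph{marginally} uniform and may depend on $y$, whereas $V$ is uniform conditionally on $y$, is fatal to the route via $\int f \,\wrt\tilde\mu = \int f \,\wrt\mu_0$: for a fixed $f$, marginal uniformity of $U$ gives no control over $\mathbb{E}\bigl[F^{-1}_{f_\#\mu(\mu_0,y)}(U)\bigr]$, because that expectation depends on the joint law of $(y,U)$ and not merely on the marginal of $U$. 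Enlarging the collection of test functions does not obviously help either (composing $f$ with a monotone map leaves the probability integral transform unchanged), and no concrete ``second application of \Cref{lem:weak_control}'' is exhibited that would close the gap; as written, the argument stops precisely where the work begins.

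The paper's resolution is to target a different family of scalar statements, for which the dependence of $U$ on $y$ is irrelevant. Conditionally on $(\theta,y)$ one has $\mathbb{P}(f(\vartheta) \leq f(\theta) \mid \theta, y) = F_{f_\#\mu(\mu_0,y)}(f(\theta))$, which is exactly the strong-calibration statistic; taking expectations --- a step that uses only the \emph{marginal} law of $U$ --- yields $\mathbb{P}(f(\vartheta) \leq f(\theta)) = \tfrac{1}{2}$ for every $f \in \mathcal{F}_\Theta$, which is \eqref{eq:half}. One then needs not \Cref{lem:weak_control} itself but the companion result \Cref{cor:leq_char}, proved by the same sigmoid approximation of the indicators $1_{(-\infty,x]}$, which shows that this half-probability condition over all of $\mathcal{F}_\Theta$ forces two regular marginals to coincide. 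To repair your write-up, replace the attempted equality of expectations by this probability statement and supply (or cite) the corresponding characterisation; the inverse-\ac{cdf} identities for $f(\theta)$ and $f(\vartheta)$ can then be dropped entirely.
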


\noindent By virtue of \Cref{lem:S_implies_W}, the learning procedures that were shown to be strongly calibrated in \Cref{subsubsec: examples strong} are also weakly calibrated.
However, the converse is not true in general, and the following example provides a cautionary tale:

\begin{figure}[t!]
\includegraphics[width = \textwidth]{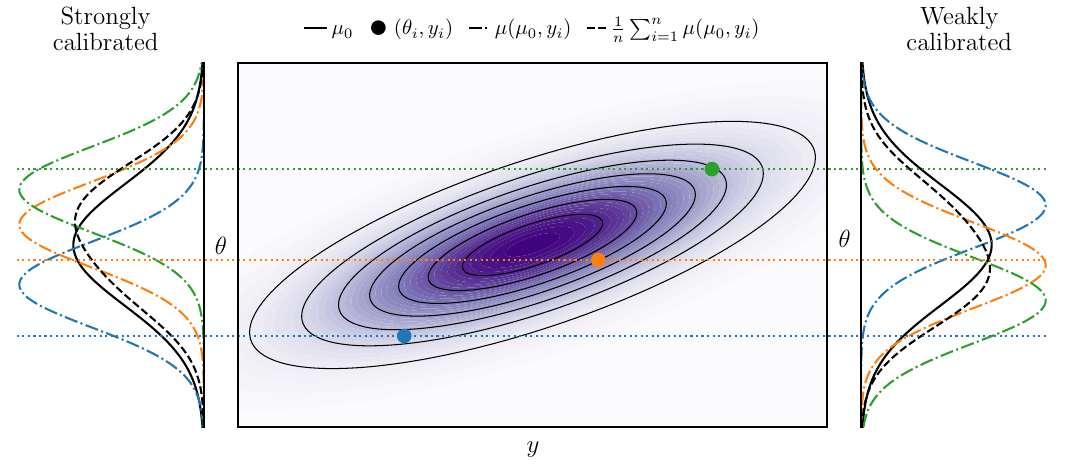}
\caption{\protect\input{figures/caption}}
\label{fig: illustration}
\end{figure}

\begin{example}[Weakly Calibrated $\centernot\implies$ Strongly Calibrated]
\label{ex: weak is bad}
A learning procedure $\mu$ may produce quite unreasonable distributional output $\mu(\mu_0,y)$ and yet be weakly calibrated.
As a concrete example, consider $\Theta = \reals$, an initial belief distribution $\mu_0 = \mathcal{N}(0,1)$, and a data-generating model $P_y(\theta)$ distributed according to $y = \theta + \epsilon$, with independent noise $\epsilon \sim \mathcal{N}(0,1)$.
The Bayesian learning procedure produces $\mu(\mu_0,y) = \mathcal{N}(y/2,1/2)$ and is both weakly and strongly calibrated to $(\mu_0, P_y)$ (see the left hand panel in \Cref{fig: illustration}).
The ``mirror Bayes'' learning procedure, which flips the sign of the datum $y$ before the Bayesian learning procedure is applied, produces $\mu(\mu_0,y) = \mathcal{N}(-y/2,1/2)$, which is not strongly calibrated to $(\mu_0, P_y)$ but is nevertheless weakly calibrated to $(\mu_0, P_y)$ (see the right hand panel in \Cref{fig: illustration}).
\end{example}

\noindent Thus there is a trade-off between strong and weak calibration, where the more straight-forward approach to testing afforded by weak calibration occurs at the expense of failing to rule out pathologically bad learning procedures, such as \Cref{ex: weak is bad}.

An important class of learning procedures that are widely used and yet are \emph{not} weakly calibrated are the generalised Bayesian learning procedures \citep{Bissiri2016}.
These are typically not weakly calibrated to the data-generating model and the prior, since these learning procedures are motivated by the \emph{M-open} setting \citep[\S6.1.2]{Bernardo2000} where the data-generating model may be misspecified.
A canonical example of a generalised Bayesian procedure is presented  next:

\begin{example}[Fractional Posteriors are not Weakly Calibrated] \label{eg:power}

To avoid technical obfuscation, in this example we abuse notation and assume that $\mu_0$ and $\mu(\mu_0,y)$ can be identified with densities with respect to the reference measure $\lambda$ on $\Theta$, i.e.\ $\mu_0(A) = \int_A \mu_0(\theta) \, \wrt \lambda(\theta)$ for each $\mu_0$-measurable set $A$ (and analogously for $\mu(\mu_0,y)$).
Similarly, we assume that $P_\theta$ admits a density $p(\cdot \mid \theta)$ with respect to a suitable reference measure $\rd y$ on $Y$.\footnote{Note that this is not the same as assuming $\mu$ and $\mu_0$ are regular, since their \ac{pdf}s are not required to be positive on $\Theta$.}

Here we consider \textit{fractional posteriors} \citep{Bhattacharya2019}, a prototypical instance of a generalised Bayesian learning procedure.
As with partial posteriors in \cref{ex: partial strong}, fractional posteriors have been proposed as a remedy for model misspecification (e.g.\ in SafeBayes, \cite{Grnwald2017}).
The distributional output of a fractional posterior is defined as
$\mu(\mu_0,y)(\theta) \defeq p(y|\theta)^t \mu_0(\theta) / p_t(y)$, $\theta \in \Theta$,
where $t \in [0,1]$ and we have defined $p_t(y) \defeq \int p(y \mid \vartheta)^t \mu_0(\vartheta) \, \wrt\vartheta$, assuming that $p_t(y) > 0$.
As an example, consider $\mu_0 = \mathcal{N}(0, 1)$, $p(y \mid \theta) = \mathcal{N}(y; \theta, \sigma^2)$, $\sigma > 0$.
Our aim is to verify condition (i) in \Cref{def:calibrated}, which requires the distribution
\begin{align*}
	\iint \mu(\mu_0,y) \, \wrt P_\theta (y) \, \wrt\mu_0(\theta)
	= \mathcal{N}\left(
			\vartheta;
			0, \frac{t^2 (\sigma^2 + 1) + \sigma^2(t + \sigma^2)}{(t + \sigma^2)^2} 	\right)
\end{align*}
to be equal to $\mathcal{N}(\vartheta; 0, 1)$, i.e.
\begin{align*}
	\frac{t^2(\sigma^2 + 1) + \sigma^2(t + \sigma^2)}{(t + \sigma^2)^2}= 1
	\implies t^2 (\sigma^2 + 1) + \sigma^2 (t + \sigma^2) &= (t + \sigma^2)^2
	\implies \sigma^2 t (t - 1) = 0.
\end{align*}
Thus, fractional posteriors are weakly calibrated if and only if either $t=1$, which reduces to standard Bayesian inference (\Cref{ex: Bayes strong}), or $t = 0$, which is data-agnostic (\Cref{eg:trivial strong}).
\end{example}

Finally we present two examples of learning procedures that are neither strongly nor weakly calibrated, to demonstrate the potential consequences of methods not being calibrated.

\begin{figure}[t!]
\includegraphics[width = \textwidth]{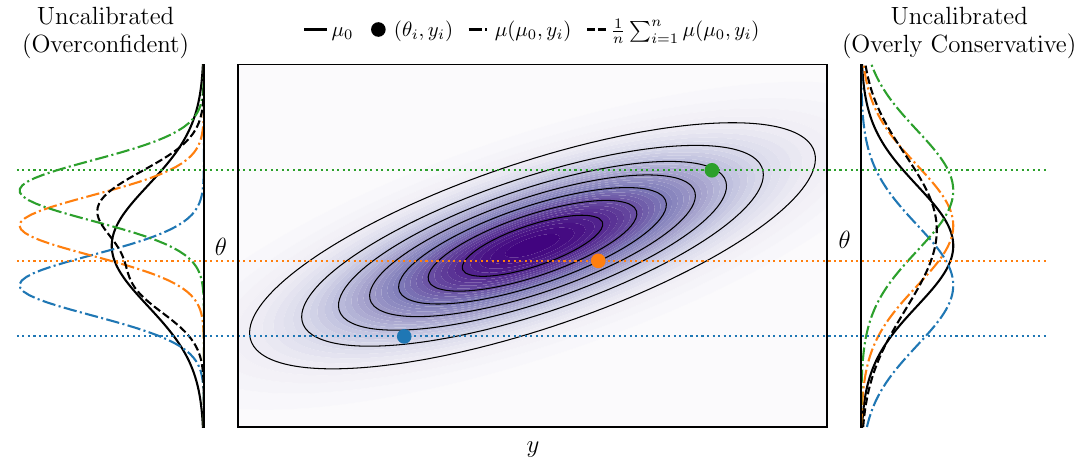}
\caption{\protect\input{figures/caption_uncalibrated}}
\label{fig: illustration_uncalibrated}
\end{figure}

\begin{example}[Consequences of Uncalibrated Methods] \label{eg: uncalibrated}

	We return to the setting of \cref{ex: weak is bad}.
	Recall that we have an initial belief distribution $\mu_0 = \mathcal{N}(0, 1)$ and a data-generating model $P_y(\theta)$ such that $y = \theta + \epsilon$ with independent noise $\epsilon \sim \mathcal{N}(0,1)$.
	The Bayesian learning procedure $\mu(\mu_0, y) = \mathcal{N}(y / 2, 1/2)$ is both weakly and strongly calibrated to $(\mu_0, P_y)$.

	Consider the setting of learning procedures that return distributional output $\tilde{\mu}(\mu_0, y) = \mathcal{N}(y / 2, \eta / 2)$ for some $\eta > 0$, that is, the procedures have the same mean as the Bayesian learning procedure but a different variance for $\eta \neq 1$.
	We illustrate the output in \cref{fig: illustration_uncalibrated}.
	When $\eta = 0.5 < 1$ (left panel), the learning procedures are \emph{overconfident}.
	The output $\tilde{\mu}(\mu_0, y)$ is narrower and more peaked than the correctly specified Bayesian procedure $\mu(\mu_0, y)$, with the consequence that the true parameter $\theta_i$ typically lies further in the tails of the distribution than for the correctly specified procedure. 
	Thus, the misspecified procedure will often suggest a high degree of confidence in the wrong value of the parameter $\theta_i$. 

	Conversely, when $\sigma = 2 > 1$ (right panel), the learning procedures are \emph{overly conservative}.
	The procedure $\tilde{\mu}(\mu_0, y)$ produces a distributional output that is wider and flatter than the correctly specified Bayesian procedure $\mu(\mu_0, y)$.
	Thus the true value of the parameter $\theta_i$ will typically be closer to the mean than the posterior variance would suggest, with the consequence that a user will often associate an accurate estimator of $\theta$ with a high degree of uncertainty.
	In both cases $\sigma = 0.5$ and $\sigma = 2$ note that the average of $\mu(\mu_0, y_i)$ differs from $\mu_0$.

\end{example}

\subsubsection{Relation to Existing Concepts} \label{subsubsec: related weak}

Here we compare and contrast our notion of weak calibration with concepts appearing in earlier work and in related fields.

\paragraph{Forecast Assessment:}

Our notion of weak calibration is closely related to a concept developed in the literature on forecast assessment, for which the term \emph{marginal calibration} is used  \citep{gneiting2007probabilistic}.
As previously mentioned in \Cref{subsubsec: strong existing}, an important distinction between forecast assessment and the present paper is the sense in which notions such as probabilistic calibration and marginal calibration are applied.
This leads to major differences between forecast assessment and the present work.
For example, probabilistic calibration does not imply marginal calibration in the context of forecast assessment,\footnote{A simple example of a forecaster who is marginally calibrated but not probabilistically calibrated is provided by the \textit{unfocussed forecaster} of \cite{gneiting2007probabilistic}; see also \cite{hamill2001interpretation}. These examples have no analogue in our context, due to the fact that there is no inherent randomness in the ``true'' parameter $\theta$, while the quantity being predicted is inherently random in the setting of forecast assessment. } while our notion of strong calibration \textit{does} imply weak calibration in the context of testing whether learning procedures are calibrated, as established in \Cref{lem:S_implies_W}.

\paragraph{Validation of Algorithms for Bayesian Computation:}

The invariance property that underpins our notion of weak calibration has previously been noted in the Bayesian context.
\cite{Talts2018} call this ``\textit{self-consistency of the data-averaged posterior}''.
It appears to have been first used in \cite{geweke2004getting}, who proposed to use it to check the correctness of \ac{mcmc} algorithms and their code.
Therein, the author proposed to alternatively sample from $y \mid \theta$ and $\theta \mid y$, the latter using \ac{mcmc}.
For a correctly implemented \ac{mcmc} method, $\theta$ will be marginally distributed according to the prior $\mu_0$ after an initial burn-in period has passed.
\citet{geweke2004getting} performed a collection of univariate hypothesis tests for this weak calibration null hypothesis, followed by a Bonferroni correction to adjust for multiple testing.
Our \Cref{def:calibrated} is similar in spirit, but is precise enough to permit logical deduction, such as \Cref{lem:S_implies_W}, and yet general enough to cover learning procedures which need not exist within a Bayesian context.
Additionally, we do not assume the structure of \ac{mcmc} that is required to render this Gibbs-like approach practical.

\vspace{5pt}

This completes our formal discussion of what it means for a learning procedure to be called ``calibrated''.
The next section presents several vignettes designed to illustrate our the general framework.

\section{Vignettes} \label{sec: simulations}

In this section we exploit our framework to test whether or not several popular learning procedures are calibrated, with five separate vignettes presented. 
The first two vignettes, \Cref{subsec: Gauss,subsec: ABC}, consider learning procedures that are motivated as being approximations to Bayesian inference and are widely used: Gaussian approximations to non-Gaussian posteriors and \textit{approximate Bayesian computation}, respectively.
In challenging applications, the output produced using these approximations can fail to resemble the usual Bayesian posterior; we therefore view these approximations as learning procedures in their own right and we ask whether their distributional output is calibrated.
\Cref{subsec: ODE calib} presents a topical application to recently developed probabilistic \ac{ode} solvers.
\Cref{subsec: data-driven} concerns the challenge of performing a goodness-of-fit test for strong calibration in multiple dimensions, where a suitable test function $f$ must first be identified.
The final vignette, \Cref{sec: robust} examines how our notions of calibration can be extended to the setting where the data-generating model is misspecified.

\subsection{Gaussian Approximations} \label{subsec: Gauss}


A common approach in statistics is to output a Gaussian distribution which approximates, in some sense, the distributional output of an idealised learning procedure.
The targeted learning procedure will often be Bayesian inference, however Gaussian approximations can also be used within different inferential paradigms.
As an example of such an approach, Gaussian approximations are often the output of variational inference methods, wherein the learning procedure outputs the member of a family of distributions (in this case Gaussian) which minimises a divergence from the target distribution \citep{Blei2017}.
A distinct but related approach is that of fitting a Gaussian approximation based on only local information.
The Laplace approximation, which outputs a Gaussian distribution centred at a maximum of the log density of the target distribution and with covariance equal to the inverse of the Hessian of the log density at this point, is a canonical example of such a method.

As a first simulation study we test the calibration of Laplace approximations to the Bayesian posterior in a model with a location parameter $\theta$.
We assign a prior $\mu_0 = \mathcal{N}(0, 1)$, and a Student's $t$ data-generating model $P_\theta$ such that $y$ consists of $n$ independent draws from a $\mathcal{T}(\theta, 1, \nu)$ distribution.
To be specific, $y = (y^{(n)})_{n=1}^N$, with $y^{(n)} \stackrel{\text{\ac{iid}}}{\sim}\mathcal{T}(\theta, 1, \nu)$ for $n \in \lbrace 1, \dots, N\rbrace$.

The true posterior in this case is non-Gaussian and so our expectation is that a Laplace approximation will be neither strongly nor weakly calibrated.
However, for $\nu \to \infty$ or $N \to \infty$ (and $\nu > 2$) the posterior will become increasingly close to Gaussian, in the former case due to the Student's $t$ distribution becoming increasingly close to Gaussian as $\nu \to \infty$, and in the latter due to the asymptotic normality of the posterior as $N \to \infty$ by the Bernstein--von Mises theorem for $\nu > 2$.
We therefore would expect it to be increasingly challenging for the tests in \Cref{rem: GOF} and \Cref{rem: GOF2} to reject respectively strong and weak calibration as $\nu \to \infty$ or $N \to \infty$.

\begin{figure}[t]
	\begin{subfigure}{0.49\textwidth}
	\includegraphics[width=\textwidth]{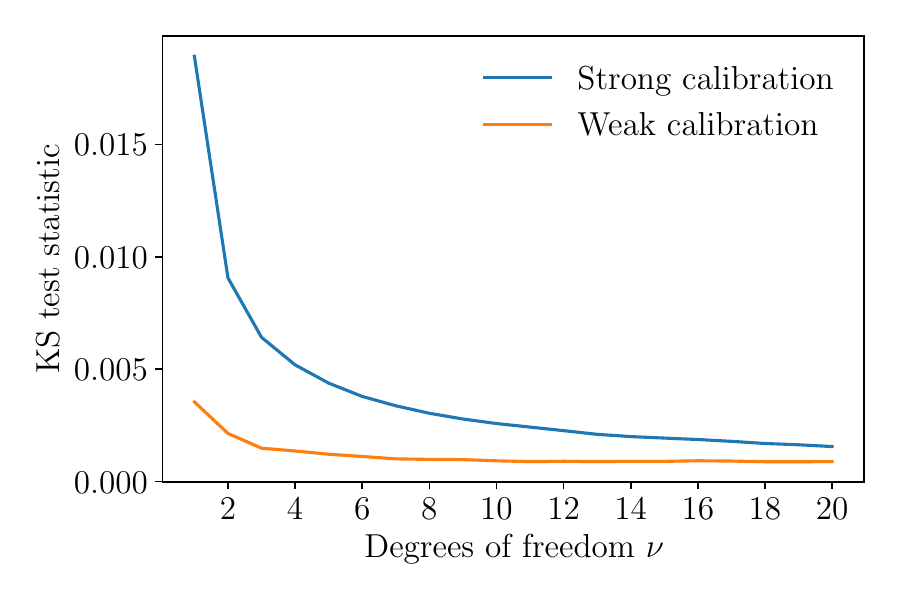}
	\end{subfigure}
	\begin{subfigure}{0.49\textwidth}
	\includegraphics[width=\textwidth]{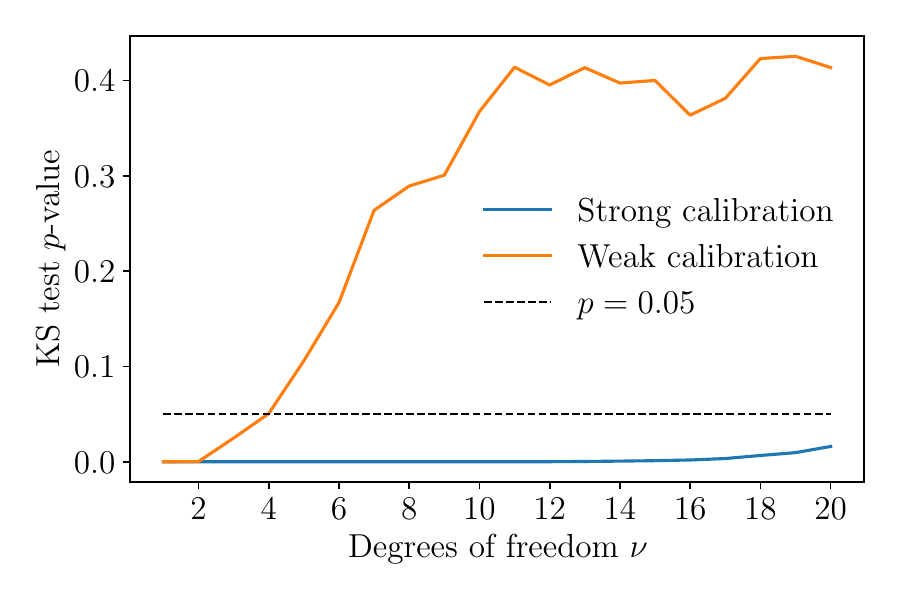}
	\end{subfigure}
	\\
	\begin{subfigure}{0.49\textwidth}
	\includegraphics[width=\textwidth]{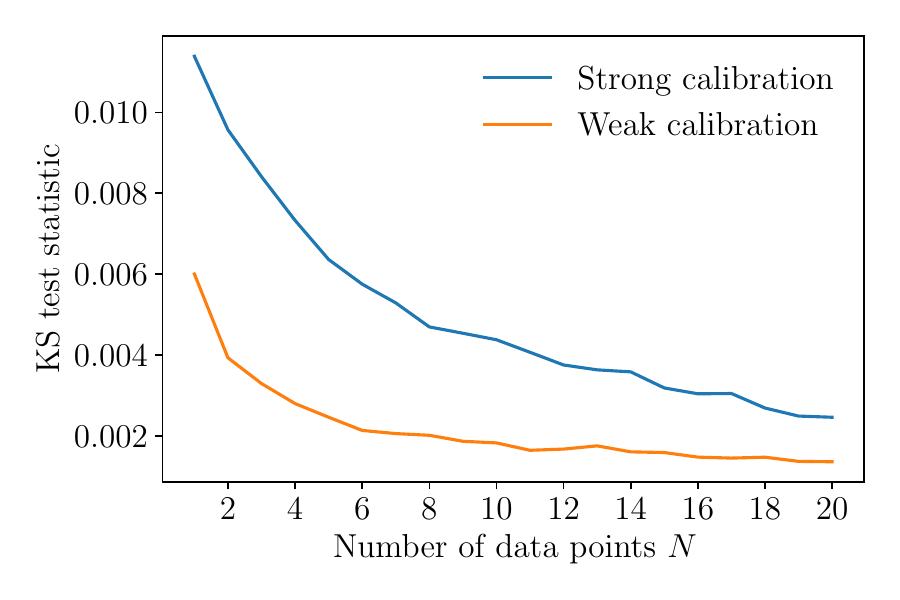}
    \end{subfigure}
	\begin{subfigure}{0.49\textwidth}
	\includegraphics[width=\textwidth]{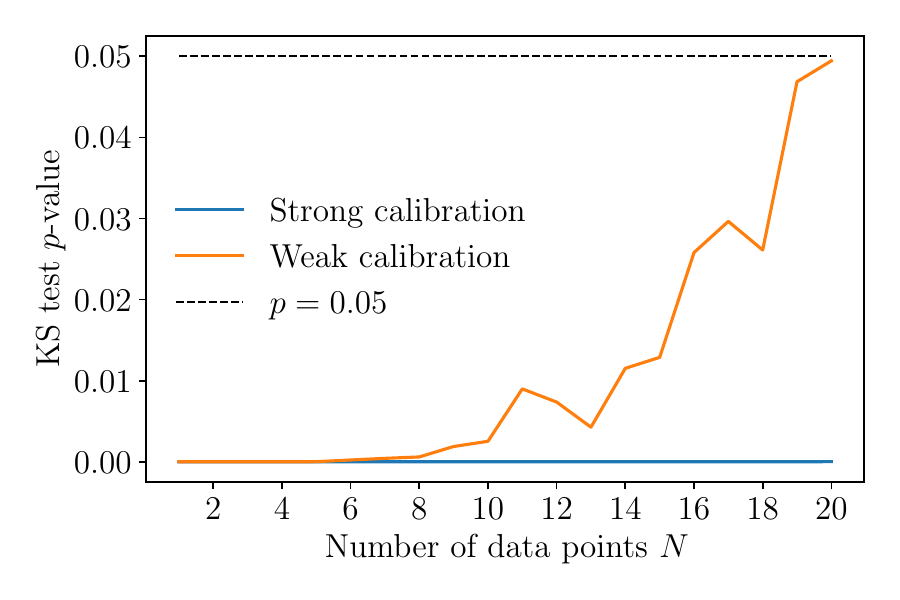}
	\end{subfigure}
	\caption{Gaussian approximations: \ac{ks} test statistics (left) and $p$-values (right) for strong and weak calibration of Laplace approximations in the $t$-distribution example for varying degrees of freedom $\nu$ and $N = 5$ (top) and varying number of data $N$ and $\nu = 3$ (bottom).} \label{fig:laplace_calibration}
\end{figure}

In univariate cases such as this, we may employ the identity test function $f(\theta) = \theta$ and a one-sample \ac{ks} test to check for uniformity in the tests in \Cref{rem: GOF,rem: GOF2}.
Laplace approximations were computed for $10^6$ realisations from the hierarchical model $\theta_i \stackrel{\text{\ac{iid}}}{\sim} \mu_0$, $y_i \mid \theta_i \stackrel{\text{\ac{iid}}}{\sim} P_{\theta_i}$, for each of $\nu \in \lbrace 1, 2, \dots 20\rbrace$ with $N=5$ and for each of $N \in \lbrace 1, 2, \dots 20\rbrace$ with $\nu = 3$.
The strong and weak calibration test results are summarised in \Cref{fig:laplace_calibration}.
As expected, we see that the power of both the strong and weak calibration tests decrease as $\nu$ and $N$ increase, with the \ac{ks} test statistics (defined in \eqref{eq:Kolmogorov_distance}) showing decreasing departures from uniformity.
While the strong calibration test rejects the null hypothesis at a 0.05 significance criterion for all values of $\nu$ and $N$ tested, the weak calibration test fails to reject at a 0.05 level for most of the $\nu$ range.
However, for the results with varying $N$, we see that weak calibration test correctly rejects the null hypothesis at a 0.05 significance level up to $N = 20$.

A test of strong calibration is clearly preferable to a test of weak calibration in situations where it is possible to be performed.
However, these results indicate that the weaker test in \Cref{rem: GOF2} is still able to provide a useful check of calibration in some situations, with the benefit of being simpler to compute and more widely applicable than the test in \Cref{rem: GOF}.

\subsection{Approximate Bayesian Computation} \label{subsec: ABC}

Performing Bayesian inference in settings for which the data-generating model $P_\theta$ does not have a tractable \ac{pdf} is challenging, with \ac{abc} methods \citep{beaumont2002approximate} often used as an alternative in such situations.
The key idea in \ac{abc} is that, in contrast to the standard Bayesian procedure of conditioning on the observed dataset $y = y_{\textrm{obs}}$, one instead conditions on the event that $d(y, y_{\textrm{obs}}) < \epsilon$, for some distance $d \colon Y \times Y \to [0,\infty)$ and some \emph{tolerance} $\epsilon > 0$.
Typically the distance is specified by embedding the data into a finite-dimensional normed vector space $S$ via a \emph{summary statistic} function $s \colon Y \to S$ and specifying the distance as $d(y, y_{\textrm{obs}}) = \|s(y) - s(y_{\textrm{obs}})\|$.

As a consequence of \Cref{ex: partial strong}, the learning procedure that exactly conditions on $s(y) = s(y_{\textrm{obs}})$, i.e.\ \ac{abc} with tolerance $\epsilon = 0$, is guaranteed to be strongly calibrated.
Likewise in the limit of $\epsilon \to \infty$ the \ac{abc} posterior will be strongly calibrated, as the posterior will collapse to the prior (see \Cref{eg:trivial strong}).
For $\epsilon \in (0, \infty)$ the \ac{abc} posterior will in general however be neither strongly nor weakly calibrated.
To resolve this lack of calibration of \ac{abc} methods, \citet{fearnhead2012constructing} proposed the \emph{noisy \ac{abc}} algorithm, which is calibrated for any tolerance $\epsilon \geq 0$.
Rather than conditioning on the event $\|s(y) - s(y_{\textrm{obs}})\| < \epsilon$, noisy \ac{abc} replaces $s(y_{\textrm{obs}})$ with noisy summary statistics $\tilde{s}_{\textrm{obs}}$ generated according to
$\tilde{s}_{\textrm{obs}} = s(y_{\textrm{obs}}) + \epsilon x$,
with $x$ uniformly distributed on the unit ball in $S$.
The distributional output of noisy \ac{abc} is the partial posterior based on $\tilde{s}_{\textrm{obs}}$, which takes into account the additional noise in the data-generating model, and is therefore strongly calibrated by an extension of the argument in \Cref{ex: partial strong}.

\begin{figure}[t!]
	\includegraphics[width=\textwidth]{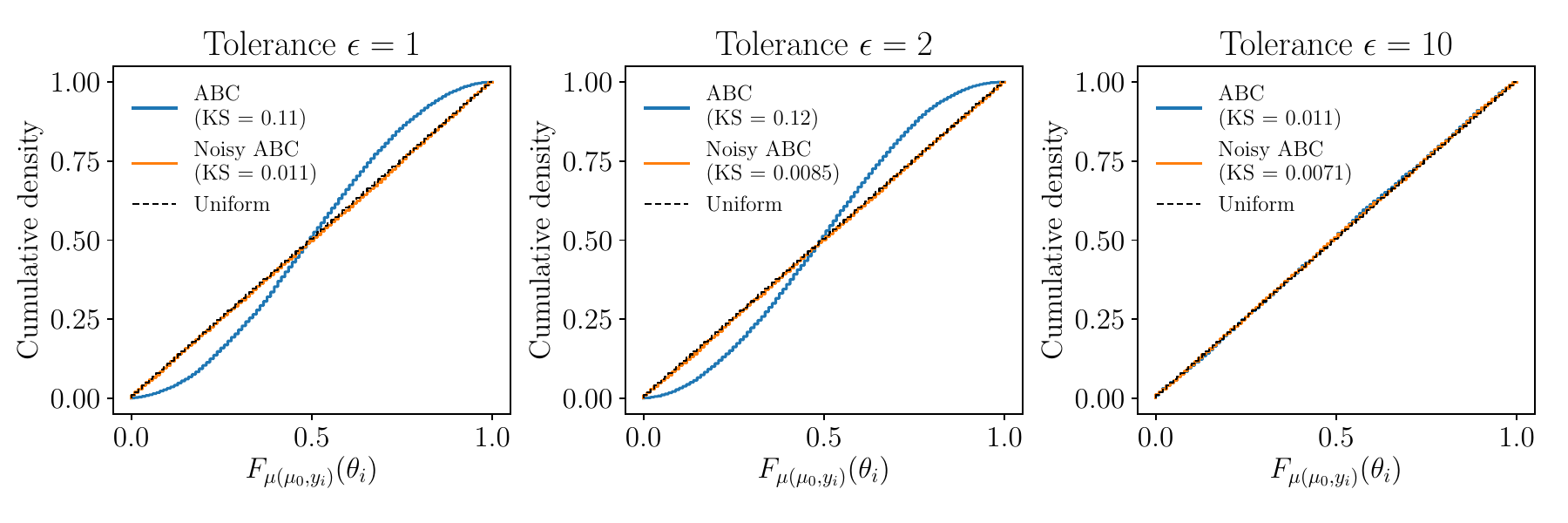}
		\caption{Approximate Bayesian computation: Empirical \ac{cdf}s for both \ac{abc} and noisy \ac{abc} in the $g$-and-$k$ quantile distribution example, for tolerances $\epsilon = 1$ (left), $\epsilon = 2$ (middle) and $\epsilon = 10$ (right).
		The values of the \ac{ks} test statistics are shown in the legend.
	} \label{fig:abc_calibration_ecdfs}
\end{figure}

Here we consider the parameter inference task for a $g$-and-$k$ distribution.
The $g$-and-$k$ distribution is defined through the inverse of its \ac{cdf} (quantile function) and it does not have a closed-form \ac{pdf}  \citep[though the \ac{pdf} can be evaluated numerically;][]{rayner2002numerical}. 
Here we aim to infer the location parameter $\theta$, which is assigned a prior $\mu_0 = \mathcal{N}(0, 1)$, given a dataset $y \in \reals^N$, $N = 20$, generated according to the data-generating model
\begin{align*}
	P_\theta \; : \; y^{(n)} &= \theta + b \left(1 + 0.8 \; \frac{1 - \exp(-g u_n)}{1 + \exp(-g u_n)}\right) u_n (1 + u_n^2)^k , 
\end{align*}
with $u_n \stackrel{\text{\ac{iid}}}{\sim} \mathcal{N}(0, 1)$, $n \in \lbrace 1, 2, \dots N \rbrace$, $b = 1$, $g=2$ and $k = 0.5$.
For the tests that follow we computed independent realisations from the hierarchical model $\theta_i \stackrel{\text{\ac{iid}}}{\sim} \mu_0$, $y_i \mid \theta_i \stackrel{\text{\ac{iid}}}{\sim} P_{\theta_i}$.
In each case, data were summarised as a vector $s \colon \reals^N \to \reals^5$ consisting of the five quartiles of the dataset, and rejection sampling was used to generate $M$ samples $\{\theta_i^m\}_{m=1}^M$ from the distributional output of both \ac{abc} and noisy \ac{abc}, for tolerances $\epsilon \in \lbrace 1, 2, \dots 10\rbrace$.
Single samples ($M = 1$) can be directly used to test for weak calibration, as per \Cref{rem: GOF2}.
However, the intractability of the distributional output for \ac{abc} and noisy \ac{abc} precludes a straightforward test for strong calibration.
Instead, we consider a variant of the test for strong calibration in \Cref{rem: GOF}, which in a similar spirit to \citet{Talts2018}, wherein we test whether the rank statistics $r(\lbrace \theta^m_i \rbrace_{m=1}^M,\theta_i)$ are \ac{iid} samples from the discrete uniform distribution on $\lbrace 0, 1, \dots M \rbrace$.
For testing strong calibration, a total of $10^4$ realisations of the hierarchical model were considered with $M = 100$, while for the less computationally demanding test for weak calibration a total of $10^6$ realisations were considered with $M = 1$.

\begin{figure}[t!]
	\begin{subfigure}{0.5\linewidth}
	\includegraphics[width=\textwidth]{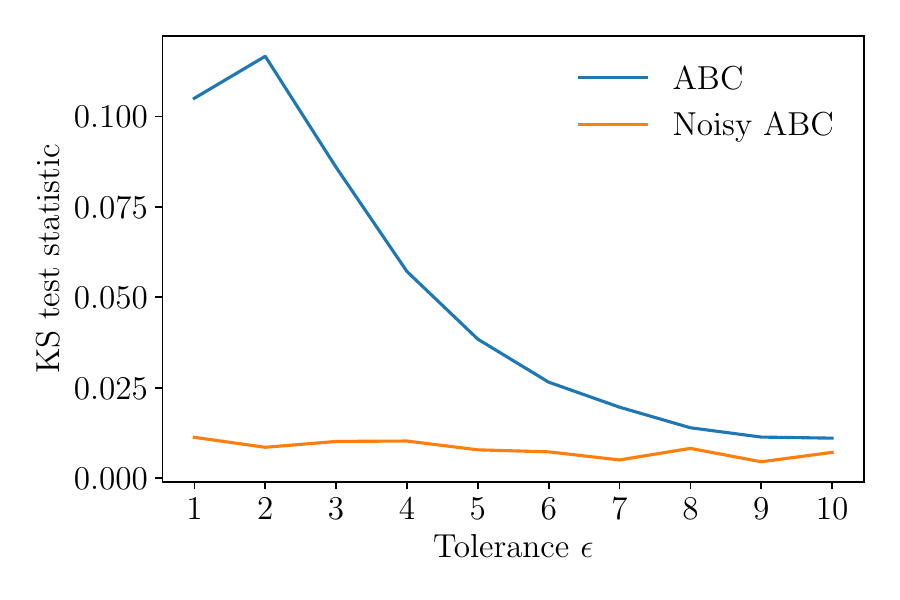}
	\end{subfigure}%
	\begin{subfigure}{0.5\linewidth}
	\includegraphics[width=\textwidth]{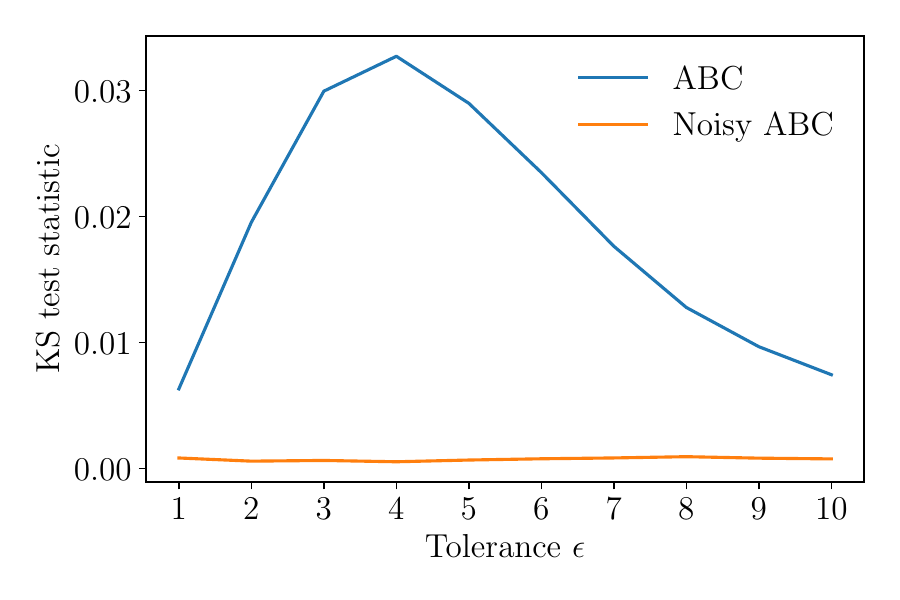}
	\end{subfigure}%
	\\
	\begin{subfigure}{0.5\linewidth}
	\includegraphics[width=\textwidth]{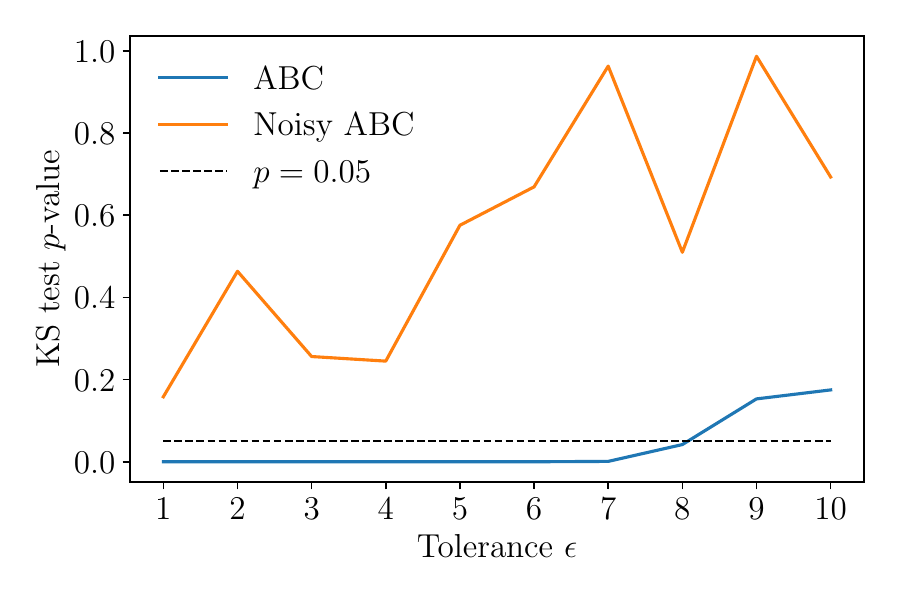}
	\end{subfigure}%
	\begin{subfigure}{0.5\linewidth}
	\includegraphics[width=\textwidth]{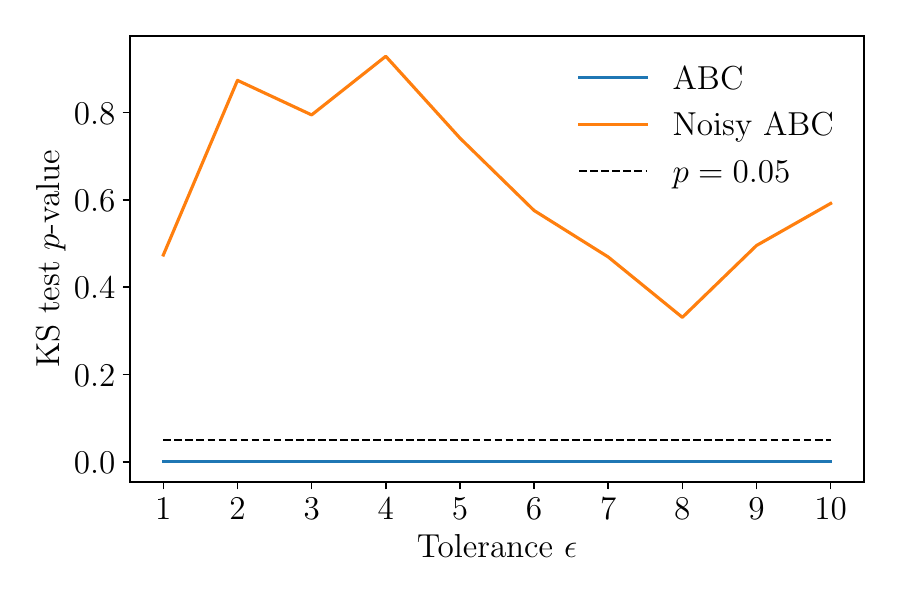}
	\end{subfigure}
	\caption{Approximate Bayesian computation: \ac{ks} test statistics (top) and $p$-values (bottom) for strong (left) and weak (right) calibration of \ac{abc} and noisy \ac{abc} in the $g$-and-$k$ quantile distribution example, for various tolerances $\epsilon > 0$.
	} \label{fig:abc_calibration}
\end{figure}

\Cref{fig:abc_calibration_ecdfs} presents empirical \ac{cdf}s for both \ac{abc} and noisy \ac{abc}, on which our test for strong calibration is based.
\Cref{fig:abc_calibration} presents the \ac{ks} test statistics and corresponding $p$-values for both strong and weak calibration, for different values of the tolerance $\epsilon > 0$.
In each case noisy \ac{abc} is, as expected, seen to be better calibrated than \ac{abc}. Both the strong and weak calibration tests correctly fail to reject the null hypothesis at a 0.05 significance level for noisy \ac{abc}, which is strongly (and weakly) calibrated, for all values of the tolerance $\epsilon$.
The strong calibration test fails to reject the null hypothesis that \ac{abc} is strongly calibrated for the highest two tolerances $\epsilon \geq 9$. The weak calibration test on the other hand correctly rejects at a 0.05 level the null hypothesis that \ac{abc} is weakly calibrated for all $\epsilon$. The apparent greater power of the weak calibration test here likely arises from the much larger number of model realisations used --- $10^6$ compared to $10^4$ for the strong test --- for a given computational expenditure due to the need to generate only $M=1$ \ac{abc} sample per realisation rather than $M = 100$. A final interesting point of note is that both weak and strong calibration show a ``dip'' in the \ac{ks} test statistic at $\epsilon=1$, reflecting that as $\epsilon\to 0$ classical \ac{abc} tends towards a Bayesian procedure, which is guaranteed to be calibrated.

\subsection{Calibration of Probabilistic ODE Solvers} \label{subsec: ODE calib}

A traditional (adaptive) numerical method for the approximate solution of an \ac{ode} accepts, as its input, an error tolerance $\tau > 0$ and returns, as its output, an approximation to the solution of the \ac{ode}.
In general it is not guaranteed 
that the resulting approximation has error less than $\tau$, but empirical analysis over a range of typical \acp{ode} can provide reassurance that the error will be below $\tau$ for many problems practically encountered.
In contrast to the traditional approach, there has been a concerted research effort in recent years to develop \acp{pnm} for \acp{ode}.
A \ac{pnm} returns a probability distribution over the solution space of the \ac{ode}, representing epistemic uncertainty associated with the unknown true solution of the \ac{ode}.
The scale of this distributional output can be used as the basis for selecting a suitable time step size in order to drive the uncertainty below a user-specified tolerance $\tau$, if desired.
Compared to traditional numerical methods, which have benefited from over a century of development, important questions regarding their behaviour of \acp{pnm} remain unanswered, including whether such methods are calibrated.
Most \acp{pnm} exploit \ac{gp} models for the solution of the \ac{ode}, 
motivated by mathematical convenience rather than detailed knowledge of the \ac{ode} to be solved.
These models typically include hyperparameters for the \ac{gp},
which are jointly estimated along with the solution of the \ac{ode}.
Given that \ac{pnm} act on the basis of a default \ac{gp} model, essentially independent of initial belief $\mu_0$ regarding the \ac{ode} at hand, it is unclear whether hyperparameter estimation is sufficient to ensure \ac{pnm} are calibrated.

\newcommand\figwidth{0.191}

\begin{figure}[t!]
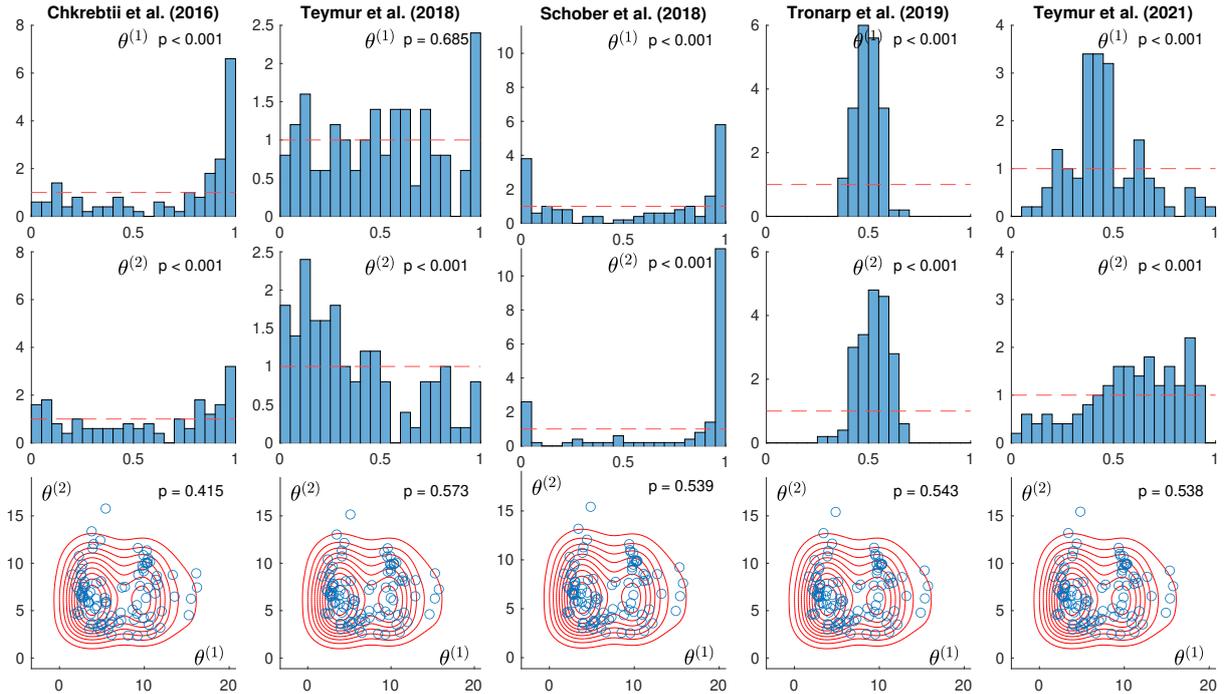

\includegraphics[width = \figwidth\textwidth]{figures/pnodes/\converteps{chk}}\ \ 
\includegraphics[width = \figwidth\textwidth]{figures/pnodes/\converteps{teym}}\ \ 
\includegraphics[width = \figwidth\textwidth]{figures/pnodes/\converteps{scho}}\ \ 
\includegraphics[width = \figwidth\textwidth]{figures/pnodes/\converteps{tron}}\ \ 
\includegraphics[width = \figwidth\textwidth]{figures/pnodes/\converteps{bbpn}}
\caption{ 
Calibration of probabilistic ODE solvers: 
Samples (blue) from the strong calibration test statistic $F_{f_\# \mu(\mu_0,y_i)}(f(\theta_i))$ (c.f. \Cref{rem: GOF}), with $f(\theta) = \theta^{(1)}(10)$ (top row), $\theta^{(2)}(10)$ (middle row), and from the weak calibration test statistic $\vartheta_i \sim \mu(\mu_0,y_i)$ (bottom row; c.f. \Cref{rem: GOF2}).
The reference distribution, corresponding to the null hypothesis that the learning procedures are calibrated, is in each case shown in red. 
The $p$-values for the associated hypothesis tests (see main text for details) are shown in the top right-hand corner of each panel.
}
\label{fig: ODEs}
\end{figure}

The principal application of \acp{pnm} for \acp{ode} is to \textit{inverse problems}, where an \ac{ode}'s parameters are to be estimated based on a dataset.
This usually requires the numerical solution of many \acp{ode}, each corresponding to different values of the parameters, to see which parameter values are compatible with the dataset.
The motivation for \acp{pnm} in this setting is that the solution of the \acp{ode} can be viewed as an unknown latent quantity and integrated out, potentially using a fast-but-crude \ac{pnm} in place of an adaptive \ac{ode} solver and adjusting credible sets for \ac{ode} parameters in a way commensurate with the accuracy of the \ac{pnm} used.
However, the success of this approach hinges on whether the underlying \ac{pnm} is calibrated, as otherwise under- or over-confident parameter inferences could be produced.
To shed light on this question, we considered the probabilistic numerical solution of the following Lotka--Volterra \ac{ode}
\begin{equation}
\frac{\mathrm{d}\theta}{\mathrm{d}t} = \left[  \begin{array}{c} \alpha \theta^{(1)} + \beta \theta^{(1)} \theta^{(2)} \\ \gamma \theta^{(2)} + \delta \theta^{(1)} \theta^{(2)} \end{array} \right],\quad \theta(0) = \left[ \begin{array}{c} 10 \\ 10 \end{array} \right] ,\quad  t \in [0,10] , \label{eq: LV}
\end{equation}
with an initial belief distribution $\mu_0$ induced over the solution space of differentiable functions $\theta(t)$ on $[0,10]$ by sampling parameters $(\alpha,\beta,\gamma,\delta)$ from a probability distribution $\pi$ on $[0,\infty)^4$. 
For this experiment we took the distribution $\pi$ to be
\begin{equation*}
	\alpha,\gamma \overset{\mathrm{iid}}{\sim} \mathrm{logNormal}(0,0.25) , \qquad
	\beta,\delta \overset{\mathrm{iid}}{\sim} \mathrm{logNormal}(-2,0.1) ,
\end{equation*}
which produces a variety of periodic trajectories typically associated with this type of predator-prey model.
The following \acp{pnm} were considered: \citet{chkrebtii16}, which employs a particle-based approach requiring parallel simulations to produce empirical credible sets; \citet{teymur18}, which is based on stochastic perturbation of traditional numerical methods, continuing a line of work that originated in \citet{conrad16}; \citet{schober18} and \citet{Tronarp18}, which are both based on Gaussian filtering but with different approaches to the (local) linearisation of \eqref{eq: LV}; and \citet{teymur21}, which is based on a probabilistic version of Richardson extrapolation.
Each method has user-defined settings that can in principle affect the selection of its hyperparameters, and thus, its calibration in the senses used in this paper; for this experiment we considered one setting only for each \ac{pnm}, with full details contained in \Cref{app: PN}.
In particular, default settings were used for some \acp{pnm}, whilst the settings of other \acp{pnm} were manually selected. 
Thus we do not claim to draw general conclusions about the specific \acp{pnm} involved; our aim is only to show how diverse algorithms can be analysed using the notions of calibration we have introduced.

Tests of strong and weak calibration were performed, in each case using the test functions $f_j(\theta) = \theta^{(j)}(10)$, $j \in \{1,2\}$, i.e. the value of the solution at the final time point.
Results are displayed in \Cref{fig: ODEs}. 
The top two rows show histograms of $F_{f_\# \mu(\mu_0,y_i)}(\theta_i^{(j)}(10))$ for $j \in \{1,2\}$, using 100 samples $\theta_i$ drawn from $\mu_0$. 
A Kolmogorov--Smirnov test of uniformity was then used to test whether the \acp{pnm} are strongly calibrated (c.f. \Cref{rem: GOF}). 
The bottom panels show scatter plots of samples $\vartheta_i(10)$ where $\vartheta_i \sim \mu(\mu_0,y_i)$, overlaid on contours of $\mu_0$ (empirically obtained). 
A kernel two-sample test \citep{gretton12} was performed based on samples from the intractable distribution $\mu_0$ to assess whether the \acp{pnm} are weakly calibrated (c.f. \Cref{rem: GOF2}).
The results of these simulations show that strong calibration is not a property enjoyed by most \ac{pnm} at present. 
The only instance where strong calibration was not emphatically rejected is \citet{teymur18}, for inference of the first component $\theta^{(1)}(10)$. 
It is interesting to note that \citet{teymur18} performs an exhaustive grid search for \ac{gp} hyperparameter estimation, which can require more computation compared to the other \ac{pnm} considered, and this may explain its relative success in this calibration assessment.
The remaining \ac{pnm} perform poorly in different ways, including being over-confident \citep[e.g.][]{schober18} and under-confident \citep[e.g.][]{Tronarp18}.
However, we reiterate that these conclusions will depend on additional user-specified settings, specific to how each \ac{pnm} is implemented.
On the other hand, weak calibration was never rejected, and indeed this was also the case over a much wider variety of algorithm settings (not presented).
This suggests that weak calibration of \acp{pnm}, in as far as this testing framework is concerned, is indeed a weak requirement.


%

\color{black}

\subsection{Data-Driven Goodness-of-Fit Testing for Strong Calibration} \label{subsec: data-driven}

For multivariate parameter inference tasks, where e.g.\ $\Theta = \reals^d$, $d > 1$, it will not be possible in general to identify a single test function $f \in \mathcal{F}_\Theta$ that has power against all alternatives to the strong calibration null.
Indeed, even a simultaneous test using all coordinate functions $f_i(\theta) = \theta^{(i)}$, $i = 1,\dots,d$, does not have power against all alternatives, since a multivariate distribution is not uniquely determined by its univariate marginals.
Nevertheless, the richness of the set $\mathcal{F}_\Theta$ is such that we expect \emph{some} $f \in \mathcal{F}_\Theta$ to yield a test with the power to reject the null hypothesis, due to \Cref{lem:weak_control}.
A strategy to select a suitable test function $f$ is therefore required.

Following a generic approach to goodness-of-fit testing, one way to proceed is to consider splitting the collection of simulated parameter-dataset pairs into two disjoint sets: $\mathcal{S}_1 \defeq \{(\theta_i,y_i)\}_{i=1}^s$, $\mathcal{S}_2 \defeq \{(\theta_i,y_i)\}_{i=s+1}^S$.
The first subset $\mathcal{S}_1$ can be used to identify a suitable test function $f$, after which a goodness-of-fit test can be conducted using $f$ and $\mathcal{S}_2$.
The independence of $\mathcal{S}_1$ and $\mathcal{S}_2$ ensures that a test conducted in this way is valid.
To select a suitable test function, one first identifies a sufficiently small subset $\mathcal{F}_s \subset \mathcal{F}_\Theta$ of test functions and, for each $f \in \mathcal{F}_s$, a univariate goodness-of-fit test is performed using $\mathcal{S}_1$.
The element of $\mathcal{F}_s$ that gives rise to the strongest evidence against the null hypothesis, based on $\mathcal{S}_1$, is selected.
The main advantage of a data-splitting approach is that the selection of $f$ is data-driven, as opposed to $f$ being user-specified.
The role of data to inform the selection of $f$ is anticipated to be increasingly important in higher dimensional settings, $d \gg 1$.
To explore this, we consider now a setting that is, at least notionally, infinite dimensional.

Let $\theta : [0,1] \rightarrow \mathbb{R}$ be a continuous function-valued parameter, so that $\Theta = C(0,1)$ is the set of continuous functions on $[0,1]$.
For $\mu_0$ we consider a hierarchical, non-stationary \ac{gp} of the form $\theta(x) := \sigma(x) g(x)$, $g \sim \mathcal{GP}(0,k)$ with $k(x,x') := \exp(-(x-x')^2/\ell^2)$, $\sigma \sim \nu$ for some distribution $\nu$ to be specified, and for simplicity $\ell = 0.1$ is fixed.
Consider the data-generating model that returns $y = (y^{(1)},\dots,y^{(10)})$, where $y^{(n)} = \theta(x_n)$ and $x_n \sim \mathcal{U}(0,1)$ are independently sampled.
A popular, pragmatic workflow acknowledges the non-stationarity encoded in $\mu_0$ but, for computational convenience, fits instead a stationary, non-hierarchical \ac{gp} of the form $\theta(x) = \sigma_0 g(x)$, where the scalar $\sigma_0$ is estimated using maximum likelihood.
Estimating $\sigma_0$ from data enables the scale of the distributional output to roughly adapt to the scale of the dataset, but this is insufficient to ensure the learning procedure is strongly calibrated \citep{Karvonen2020}.
Our interest here is in whether we can detect failure of strong calibration, and for this purpose we consider a simple form of $\nu$ that sets $\sigma(x) = 1+x$ with probability one.
It can be expected that simplified \ac{gp} regression produces a ``compromise'' value of $\sigma_0$, which leads to under-confident inferences for $\theta(x)$ when $x$ is close to 0 and over-confident inferences when $x$ is close to 1.

For the set of candidate test functions $\mathcal{F}_s$, we consider the evaluation functions $f_x(\theta) := \theta(x)$, indexed by $x \in [0,1]$.
A number, $S$, of parameter-dataset pairs were generated, of which $s = \frac{S}{2}$ were assigned to $\mathcal{S}_1$ and used to identify a promising location $x_* \in [0,1]$ at which to perform a hypothesis test of strong calibration using the held-out $\mathcal{S}_2$.
Since the marginals $(f_x)_{\#} \mu(\mu_0,y)$ are Gaussian, it is natural to use a $\chi_s^2$ test, as per \eqref{eq: cook}.
Thus we select $x_*$ to minimise the $p$-value of a two-sided $\chi_s^2$ test, based on $f_x$ and computed using $\mathcal{S}_1$, over $x \in [0,1]$.
The total number of simulated parameter-dataset pairs $S$ was varied from $10$ to $150$ and, through repeated simulation, the $p$-values of a two-sided $\chi_s^2$ test of strong calibration, based on the estimated $x_*$ and $\mathcal{S}_1$, were computed.
As a baseline, we also computed $p$-values for a user-specified test function centred at $x_b := 0.5$.
In \Cref{fig:gp calibrate} (left) we plot log $p$-values as a function of $x$, for $s = 10$ (top) and $s=150$ (bottom), for one typical realisation of $\mathcal{S}_1$.
These results indicate that values of $x$ close to 0 are likely to provide the most power for our hypothesis test.
Here $x_*$ is indicated as a vertical red line and $x_b$ indicated as a vertical blue line; the identification of a suitable $x_*$ is seen to be easier when the number, $s$,  of simulations available in $\mathcal{S}_1$ is increased.
Finally, in \Cref{fig:gp calibrate} (right) we plot the $p$-values obtained when the $x_*$-based and $x_b$-based  tests are applied to $\mathcal{S}_2$.
To avoid reporting an artefact of the random seed, average log $p$-values are reported, along with standard errors, based on 100 independent realisations of $\mathcal{S}_1$ and $\mathcal{S}_2$.
It is seen that the data-driven goodness-of-fit test (based on $x_*$) is more powerful than the user-specified test (based on $x_b$).

\begin{figure}[t!]
\includegraphics[width = \textwidth]{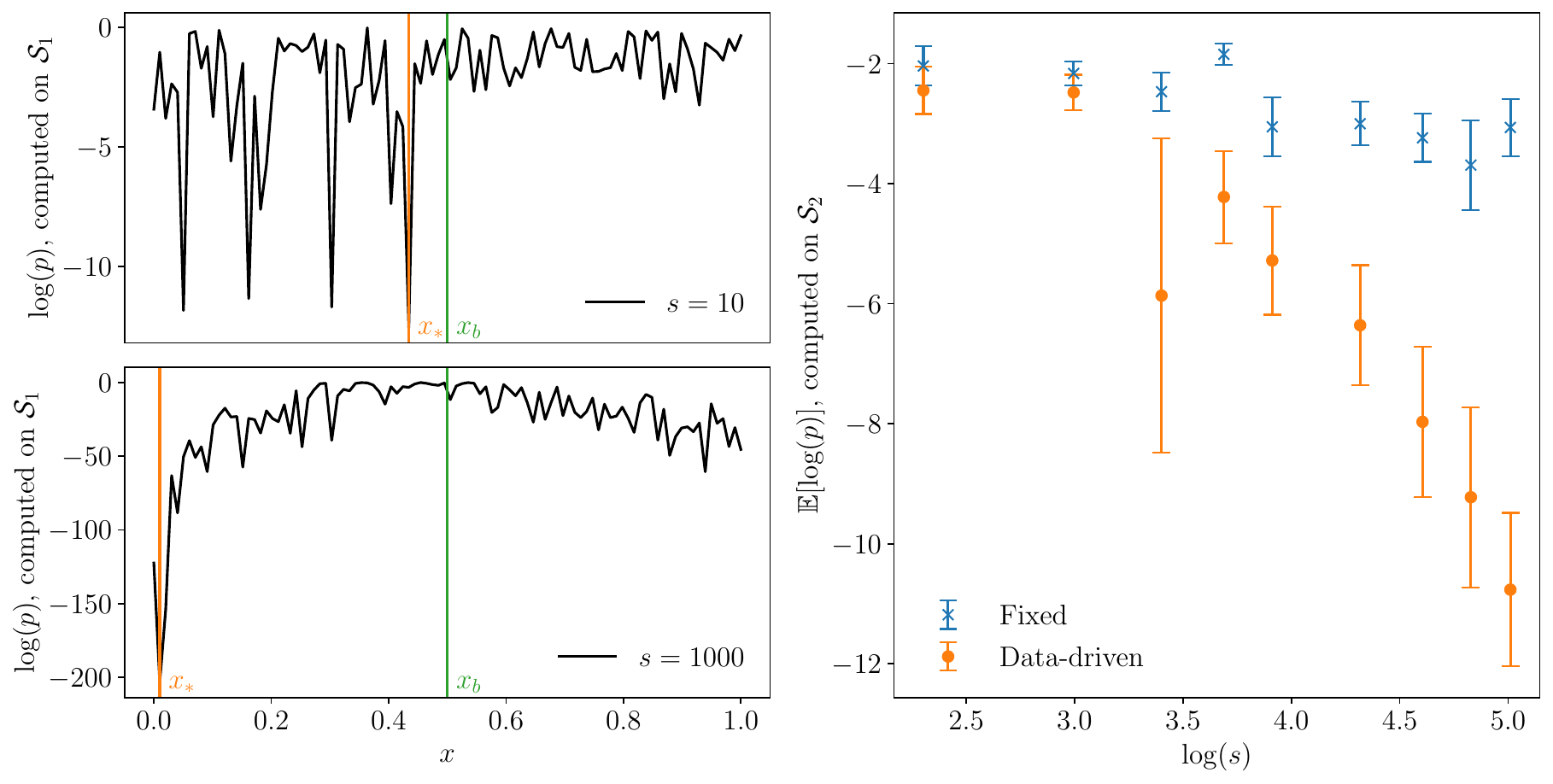}
\caption{Data-driven goodness-of-fit testing for strong calibration:
Here we consider an infinite-dimensional parameter $\theta \in C(0,1)$. On the left, we display typical $p$-values obtained using a test based on the evaluation functional $f_x(\theta) = \theta(x)$ and a number $s$ of independent simulations of the parameter and dataset (top: $s=10$, bottom: $s=150$).
The right hand panel displays average $p$-values obtained when the procedure is applied to $100$ independent realisations of $\mathcal{S}_1$ and $\mathcal{S}_2$, each of size $s$, using either the data-driven choice $x = x_*$ or the fixed choice $x = x_b$ of test.
 }
\label{fig:gp calibrate}
\end{figure}

This illustration makes clear that, for a data-splitting approach to work well, the size of the set $\mathcal{F}_s$ of candidate test functions should be carefully controlled, relative to the number $s$ of samples in $\mathcal{S}_1$.
For example, if we simply took $\mathcal{F}_s = \mathcal{F}_\Theta$, then for any $\alpha \in (0,1)$ there would be infinitely many elements of $\mathcal{F}_s$ for which the null hypothesis is rejected at level $\alpha$ by virtue only of the fact that $\mathcal{S}_1$ is a finite set.
Consideration of multiple data splits can also be exploited to increase the power of such a test \citep{Romano2019}.

\subsection{Robust Calibration} \label{sec: robust}

Our proposed notions of strong and weak calibration can be extended to the \emph{M-open} setting \citep[\S6.1.2]{Bernardo2000} where the data-generating model may be misspecified.
This permits us to define notions of ``robust calibration'', which are analogous (and orthogonal) to the notions of ``robust estimation'' that are already widely studied \citep{Berger1994,huber2009robust}.
For example, suppose that a learning procedure $\mu$ is strongly calibrated to $(\mu_0,P)$.
Then, for any $f \in \mathcal{F}_\Theta$, the distribution $\mathcal{U}_{f,P}$ of the random variable $ F_{f_\# \mu(\mu_0,y)}(f(\theta))$, where $\theta \sim \mu_0$, $y \mid \theta \sim P_\theta$, is by definition $\mathcal{U}(0,1)$.
Thus, when the data-generating model $P$ is misspecified, we may quantify the loss of strong calibration in terms of a statistical divergence between $\mathcal{U}_{f,Q}$ and $\mathcal{U}(0,1)$.

Here we adopt a more practical perspective, using the framework of \Cref{subsec: strong calib} to test the strong calibration null hypothesis in settings where the data-generating model is misspecified.
For example, consider a Bayesian learning procedure $\mu$ for a location parameter $\theta$, which is assigned a prior $\mu_0 = \mathcal{N}(0, 3)$, based on a likelihood $Y \mid \theta \sim \mathcal{N}(\theta,1)$.
Our assessment will be performed using the data-generating model
\begin{align*}
	P_\theta \; : \; Y \mid \theta \sim \begin{cases}
		\mathcal{N}(\theta, 1) & \text{w.p. } 1-\epsilon \\
		\mathcal{N}(5, 1) & \text{w.p. } \epsilon
	\end{cases} ,
\end{align*}
where $\epsilon \in [0,1]$ is a probability of obtaining a contaminated observation, so that for $\epsilon > 0$ the likelihood is misspecified and the Bayesian learning procedure is not strongly calibrated to $(\mu_0,P)$.
Fractional posteriors with exponent $t \in [0,1]$, as defined in \Cref{eg:power}, have been proposed as learning procedures that can offer robustness to misspecification of the likelihood e.g.\ in \cite{Grnwald2017}.
Our aim is to assess this claim within our testing framework.


Results of performing a \ac{ks} test of the strong calibration null hypothesis, using the identity test function $f(\theta) = \theta$, are displayed for a variety of values of $t$ and $\epsilon$ in \Cref{fig:contamination_strong_tests}.
Clearly the only circumstance in which any of the learning procedures is strongly calibrated is when $\epsilon = 0$ and the Bayesian procedure is used.
Otherwise, according to the test statistic in the left panel, fractional posteriors are marginally better calibrated when $\epsilon > 0$ than the Bayesian procedure, though regarding the $p$-values in the right panel one sees that the values of the statistic in these cases are still sufficiently sufficiently large to emphatically reject the strong calibration null hypothesis.

\begin{figure}
	\begin{subfigure}{0.49\textwidth}
	\includegraphics[width=\textwidth]{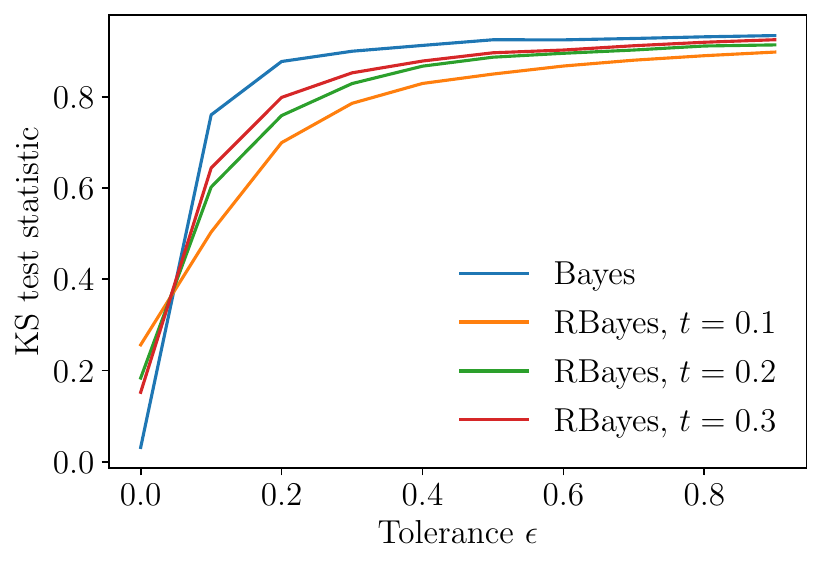}
	\end{subfigure}
	\begin{subfigure}{0.49\textwidth}
	\includegraphics[width=\textwidth]{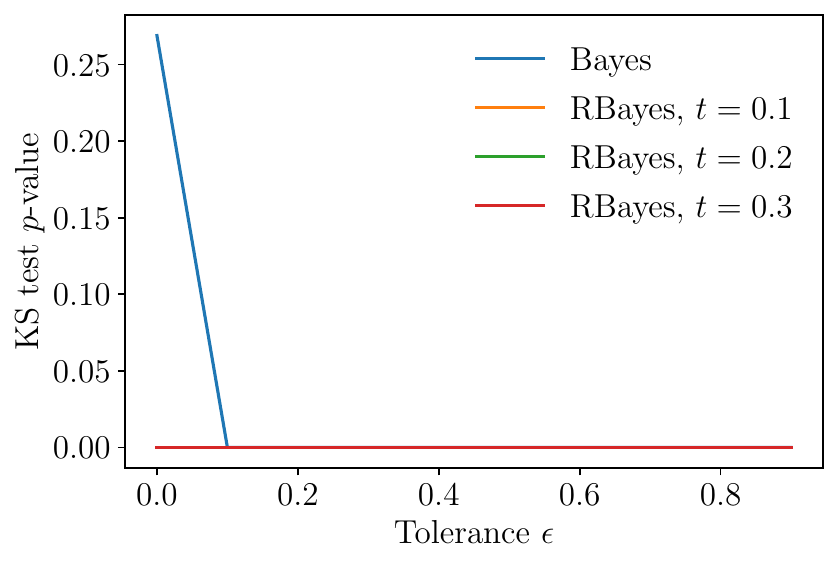}
	\end{subfigure}
	\caption{Robust calibration: Results of \ac{ks} tests for strong calibration, comparing standard Bayesian inference (``Bayes'') to the method of fractional posteriors (also called \textit{robust Bayes}; ``RBayes'') with exponents $t \in \{0.1,0.2,0.3\}$, in a setting where the likelihood is misspecified.
	Note that in the right panel the RBayes lines for $t=0.1, t=0.2$ and $t=0.3$ coincide.
	} \label{fig:contamination_strong_tests}
\end{figure}

Finally, we note that other senses of ``robust calibration'' could be considered, analogous to the various notions of ``robust estimation'' that have been studied \citep{Berger1994,huber2009robust}.
For example, one could consider a setting where true parameters $\theta$ are drawn from a distribution other than $\mu_0$ and assess the consequences, in terms of calibration, for a learning procedure that uses $\mu_0$ as the initial belief distribution.

\section{Discussion} \label{sec: discuss}

The desire that a parameter used to generate a dataset should appear plausible as a sample from the distributional output of a learning procedure, such as a Bayesian posterior, is foundational and, at least in an informal sense, widely understood and accepted.
Despite this, a precise and widely applicable notion of what it means for a learning procedure to be ``calibrated'' appears not to have been put forward.
Our aim in this paper was to propose such a definition, together with a framework for testing whether a learning procedure is calibrated.
In particular, we proposed a property called \textit{strong calibration} (\Cref{def: strong}), which provides an explicit sense in which output from the learning procedure can be considered to be meaningful.
A strictly weaker property, called \textit{weak calibration}, was also proposed (\Cref{def:calibrated}), which has the advantage of being more straightforward to test.
Several vignettes were provided to illustrate the generality and usefulness of the framework.

Our hope, in writing this manuscript, is to stimulate further critical discussion around calibration as a \textit{desideratum} for a learning procedure, and to bring together some of the disparate strands of literature where related concepts and domain-specific definitions have been developed (cf.\ \Cref{subsubsec: strong existing}).

\subsection{Further Work} \label{sec:future}

A particularly promising avenue for further research would be to develop \emph{measures} of miscalibration using the ideas proposed in this paper.
Generally speaking, when using approximate methods such as Laplace approximation (cf.\ \cref{subsec: Gauss}) or \ac{abc} (cf.\ \cref{subsec: ABC}), or generalised Bayesian methods (cf.\ \cref{sec: robust}), a user has purposefully departed from the Bayesian framework due to challenges such as its lack of computational tractability or the possibility that the model is misspecified.
In such settings a measure of miscalibration is likely to be of more use than a test for calibration, since exact calibration cannot be expected to hold.
A \emph{measure} of miscalibration might allow a user to select the ``most calibrated'' method from among multiple alternatives, or perhaps even incorporate calibration into a variational objective in a variational Bayesian framework \citep{Knoblauch2019}.

In the context of \cref{def: strong}, such a measure could be constructed by selecting some test function $f^* \in \mathcal{F}_\Theta$ and computing a statistical divergence between $F_{f^*_\# \mu(\mu_0, y)}(f^*(\theta))$ and $\mathcal{U}(0,1)$.
The former quantity is unlikely to be available in closed-form but could be estimated using Monte Carlo techniques.
Immediate challenges with this would concern selection of a suitable divergence and a suitable $f^*$. 
For the latter, one could perhaps instead consider selecting a subset $\mathcal{F}_\textrm{test} \subset \mathcal{F}_\theta$ over which a supremum can be taken tractably.
However, we leave this task for future work.

\section*{Acknowledgements}
\addcontentsline{toc}{section}{Acknowledgements}

JC was supported by Wave 1 of the UKRI Strategic Priorities Fund under the EPSRC Grant \href{https://gow.epsrc.ukri.org/NGBOViewGrant.aspx?GrantRef=EP/T001569/1}{EP/T001569/1}, particularly the ``Digital Twins for Complex Engineering Systems'' theme within that grant, and the Alan Turing Institute, UK.
MMG and CJO were supported by the Lloyd's Register Foundation programme on data-centric engineering at the Alan Turing Institute, UK.
TJS has been supported in part by the German Research Foundation (Deutsche Forschungsgemeinschaft) through project \href{https://gepris.dfg.de/gepris/projekt/415980428}{415980428} and the Excellence Cluster ``MATH+ The Berlin Mathematics Research Centre'' (EXC-2046/1, project \href{https://gepris.dfg.de/gepris/projekt/390685689}{390685689}).
The authors thank Dennis Prangle for feedback on an earlier version of the manuscript.

\bibliographystyle{abbrvnat}
\bibliography{bibliography}
\addcontentsline{toc}{section}{References}

\begin{appendices}

\crefalias{section}{appendix}

\section{Proof of Theoretical Results}

This appendix contains proofs for all novel results in the main text.
For $x \in \reals^d$, we let $(-\infty,x] \defeq (-\infty, x_1] \times \dots \times (-\infty, x_d]$ and we write $y \leq x$ whenever $y \in (-\infty,x]$, i.e.\ when $y_{i} \leq x_{i}$ for $i = 1, \dots, d$.

\subsection{Proof of \texorpdfstring{\Cref{lem:weak_control}}{Lemma \ref*{lem:weak_control}}} \label{sec:proof:weak_control}

Our proof of \Cref{lem:weak_control} makes use of the \textit{Kolmogorov distance}
\begin{align}
	\label{eq:Kolmogorov_distance}
	d_{\text{K}}(\mu,\nu) & \defeq \sup_{x \in \reals^d} d_{\text{K}}(x; \mu,\nu) , \\
	d_{\text{K}}(x; \mu,\nu) & \defeq \Absval{ \int 1_{(-\infty,x]} \, \rd \mu - \int 1_{(-\infty,x]} \, \rd \nu } ,
\end{align}
which is a metric on $\mathcal{P}(\reals^d)$ \citep[][Theorem~2.4]{shorack2000probability}. 

\vspace{5pt}

\begin{proof}[Proof of \Cref{lem:weak_control}]
	Suppose that $\mu \neq \nu$, so that it suffices to exhibit an element $f \in \mathcal{F}_\Theta$ for which $\int f \, \rd \mu \neq \int f \, \rd \nu$.
	From the metric property of $d_{\text{K}}$, there must exist $x^{\ast} \in \reals^d$ such that $\varepsilon \defeq d_{\text{K}}(x^{\ast}; \mu, \nu) > 0$.
	Now, for $c > 0$, consider the function
	\begin{align}
		f_{x^{\ast}}^{(c)} & \colon \reals^d \to (0,1), &
		f_{x^{\ast}}^{(c)}(x) & \defeq \prod_{i=1}^d \frac{1}{1 + e^{2c (x_i-x_i^*)}} , \label{eq:def_fc}
	\end{align}
	which satisfies $f_{x^{\ast}}^{(c)} \in \mathcal{F}_\Theta$.
	Since $f_{x^{\ast}}^{(c)}$ converges pointwise to $f_{x^{\ast}}^{(\infty)} \defeq 1_{(-\infty,x^{\ast}]}$ outside of a null set and $|f_{x^{\ast}}^{(c)}| \leq 1$, the dominated convergence theorem implies that $f_{x^{\ast}}^{(c)}$ is a consistent approximation of $f_{x^{\ast}}^{(\infty)}$ in the $c \to \infty$ limit in both $L^1(\mu)$ and $L^1(\nu)$.
	Therefore, there exists $c^{\ast} > 0$ such that $\|f_{x^{\ast}}^{(c^{\ast})} - f_{x^{\ast}}^{(\infty)}\|_{L^1(\mu)} < \varepsilon / 2$ and $\|f_{x^{\ast}}^{(c^{\ast})} - f_{x^{\ast}}^{(\infty)} \|_{L^1(\nu)} < \varepsilon / 2$.
	For this $f_{x^{\ast}}^{(c^{\ast})} \in \mathcal{F}_\Theta$ we have from the reverse triangle inequality that
	\begin{align*}
		\Absval{ \int f_{x^{\ast}}^{(c^{\ast})} \, \rd \mu - \int f_{x^{\ast}}^{(c^{\ast})} \, \rd \nu }
		& = \left| \left( \int f_{x^{\ast}}^{(c^{\ast})} \, \rd \mu - \int f_{x^{\ast}}^{(\infty)} \, \rd \mu \right) + \left( \int f_{x^{\ast}}^{(\infty)} \, \rd \mu - \int f_{x^{\ast}}^{(\infty)} \rd \nu \right) \right. \\
		& \qquad  \left. +  \left( \int f_{x^{\ast}}^{(\infty)} \, \rd \nu - \int f_{x^{\ast}}^{(c^{\ast})} \, \rd \nu \right) \right| \\
		& \geq \Biggl| \underbrace{ \left| \left( \int f_{x^{\ast}}^{(c^{\ast})} \, \rd \mu - \int f_{x^{\ast}}^{(\infty)} \, \rd \mu \right) + \left( \int f_{x^{\ast}}^{(\infty)} \, \rd \nu - \int f_{x^{\ast}}^{(c^{\ast})} \, \rd \nu \right) \right| }_{(\ast)} \\
		& \qquad - \underbrace{ \left| \int f_{x^{\ast}}^{(\infty)} \, \rd \mu - \int f_{x^{\ast}}^{(\infty)} \, \rd \nu \right| }_{=\varepsilon} \Biggr| .
	\end{align*}
	The triangle inequality implies that
	\[
		|(\ast)| \leq \norm{ f_{x^{\ast}}^{(c^{\ast})} - f_{x^{\ast}}^{(\infty)} }_{L^1(\mu)} + \norm{ f_{x^{\ast}}^{(\infty)} - f_{x^{\ast}}^{(c^{\ast})} }_{L^1(\nu)} < \varepsilon / 2 + \varepsilon / 2 = \varepsilon ,
	\]
	and so it follows that $\Absval{ \int f_{x^{\ast}}^{(c^{\ast})} \, \rd \mu - \int f_{x^{\ast}}^{(c^{\ast})} \rd \nu } \neq 0$.
	Thus we have exhibited an element $f_{x^{\ast}}^{(c^{\ast})} \in \mathcal{F}_\Theta$ for which $\int f_{x^{\ast}}^{(c^{\ast})} \, \rd \mu \neq \int f_{x^{\ast}}^{(c^{\ast})} \, \rd \nu$.
	This completes the proof.
\end{proof}

\subsection{Proof of \texorpdfstring{\Cref{lem:S_implies_W}}{Lemma \ref*{lem:S_implies_W}}} \label{sec:proof:S_implies_W}

First we derive a corollary of \Cref{lem:weak_control} that will be used to prove \Cref{lem:S_implies_W}:

\begin{corollary}
	\label{cor:leq_char}
	Let $\Theta = \reals^d$ for some $d \in \naturals$.
	Suppose that $\mu,\nu \in \mathcal{P}_r(\Theta)$ and that the independent random variables $\theta \sim \mu$, $\vartheta \sim \nu$ satisfy $\mathbb{P}(f(\theta) \leq f(\vartheta)) = 1/2$ for all $f \in \mathcal{F}_\Theta$.
	Then $\mu = \nu$.
\end{corollary}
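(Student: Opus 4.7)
My plan is to reduce the multivariate statement to the univariate case via pushforwards, and then to prove the univariate case using a tailor-made ``flip'' test function that isolates comparisons inside a chosen interval. Given $h \in \mathcal{F}_\Theta$ with range in $(\alpha,\beta) \subseteq \reals$, observe that for every $\phi \in \mathcal{F}_{(\alpha,\beta)}$ the composition $\phi \circ h$ lies in $\mathcal{F}_\Theta$, since $h_\#\rho$ is regular on $(\alpha,\beta)$ whenever $\rho$ is regular on $\Theta$ and pushforward by $\phi$ preserves regularity. Applying the hypothesis to $f = \phi \circ h$ then shows that the pushforward measures $h_\#\mu$ and $h_\#\nu$ on $(\alpha,\beta)$ satisfy the one-dimensional analogue of the hypothesis. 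If the one-dimensional version of the corollary is established, it will force $h_\#\mu = h_\#\nu$ for every $h \in \mathcal{F}_\Theta$, whence $\int h \, \rd\mu = \int h \, \rd\nu$ for every bounded $h \in \mathcal{F}_\Theta$; \Cref{lem:weak_control} then delivers $\mu = \nu$.

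For the univariate case (taken on $\reals$ for concreteness), I would introduce, for each $a < b$, the flip test function
\[
g_{a,b}(x) \defeq \begin{cases} x, & x \notin [a,b], \\ a + b - x, & x \in [a,b]. \end{cases}
\]
It is straightforward to verify $g_{a,b} \in \mathcal{F}_\reals$: it is measurable, has nonzero derivative almost everywhere, and pushes any regular $\rho$ to a distribution with Lebesgue density $p_\rho$ outside $[a,b]$ and $p_\rho(a+b-\cdot)$ inside, both strictly positive. A short case analysis, conditioning on which of $(-\infty,a)$, $[a,b]$, $(b,\infty)$ each of $\theta, \vartheta$ lies in, shows that the contributions to $\mathbb{P}(g_{a,b}(\theta) \leq g_{a,b}(\vartheta))$ and $\mathbb{P}(\theta \leq \vartheta)$ coincide except on the event $\{\theta, \vartheta \in [a,b]\}$, where $g_{a,b}$ reverses the ordering. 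Applying the hypothesis to both $g_{a,b}$ and the identity therefore yields
\[
R(a,b) \defeq \iint_{[a,b]^2} \mathrm{sgn}(\theta - \vartheta) \, p_\mu(\theta) \, p_\nu(\vartheta) \, \rd\theta \, \rd\vartheta = 0 \qquad \text{for all } a < b .
\]

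To conclude, I will differentiate $R(a,b) \equiv 0$ with respect to $a$; a Leibniz-rule computation gives $p_\mu(a)\,[F_\nu(b) - F_\nu(a)] = p_\nu(a)\,[F_\mu(b) - F_\mu(a)]$. Letting $b \to \infty$ reduces this to the hazard-rate identity $p_\mu(a)/(1 - F_\mu(a)) = p_\nu(a)/(1 - F_\nu(a))$, which integrates (using the boundary condition $F_\mu(-\infty) = F_\nu(-\infty) = 0$) to $F_\mu \equiv F_\nu$, and hence $\mu = \nu$ in one dimension. I anticipate that the main difficulty is the design of a suitable test function: monotone test functions encode only the scalar $\mathbb{P}(\theta \leq \vartheta) = 1/2$, which is far too weak to distinguish distributions, while sigmoidal approximations of indicators (as in the proof of \Cref{lem:weak_control}) develop atoms in the limit that confound the stochastic-precedence comparison. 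The flip function is the simplest object I know that reverses orderings exactly on $[a,b]$ while preserving them elsewhere, yielding a clean one-parameter family of identities from which the hazard-rate equality, and thus equality of the CDFs, can be extracted.
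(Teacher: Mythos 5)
Your proof is correct in its essentials, but it takes a genuinely different route from the paper's. The paper argues by contradiction: if $\mu \neq \nu$, it picks $x^{\ast}$ with $d_{\mathrm{K}}(x^{\ast};\mu,\nu)>0$, uses regularity to extend this to an open neighbourhood $N(x^{\ast})$, and then passes to the $c\to\infty$ limit in $\mathbb{P}(f_x^{(c)}(\theta)\leq f_x^{(c)}(\vartheta))$ for the sigmoidal test functions \eqref{eq:def_fc} of \Cref{lem:weak_control}, concluding that $\mathbb{P}(\vartheta\leq x)=1/2$ for every $x\in N(x^{\ast})$, which is incompatible with $\nu$ having an everywhere-positive density. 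Your route is instead constructive: you first reduce to $d=1$ by composing test functions ($\phi\circ h\in\mathcal{F}_\Theta$ whenever $h\in\mathcal{F}_\Theta$ and $\phi\in\mathcal{F}_{(\alpha,\beta)}$), then extract the one-parameter family of identities $\iint_{[a,b]^2}\mathrm{sgn}(\theta-\vartheta)\,p_\mu(\theta)p_\nu(\vartheta)\,\rd\theta\,\rd\vartheta=0$ from the order-reversing involutions $g_{a,b}$, and recover $F_\mu\equiv F_\nu$ through a hazard-rate identity. What your approach buys is precisely the point you flag at the end: the event $\{f_x^{(\infty)}(\theta)=f_x^{(\infty)}(\vartheta)\}$ has positive probability in the indicator limit, so the stochastic-precedence probability does not pass naively through the limit $c\to\infty$ (indeed, for $d=1$ each $f_x^{(c)}$ is strictly monotone, so $\mathbb{P}(f_x^{(c)}(\theta)\leq f_x^{(c)}(\vartheta))=1/2$ carries no information about $x$, while $\mathbb{P}(f_x^{(\infty)}(\theta)\leq f_x^{(\infty)}(\vartheta))=1-F_\mu(x)(1-F_\nu(x))$); your flip functions genuinely mix the two laws on $[a,b]$ and sidestep this entirely, at the cost of a longer argument. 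Two small points to tighten: the Leibniz differentiation of $R(a,b)$ is valid only at Lebesgue points of $p_\mu$ and $p_\nu$, i.e.\ for almost every $a$, which is still enough to integrate the hazard-rate identity since $1-F_\mu$ and $1-F_\nu$ are positive and absolutely continuous; and when you invoke \Cref{lem:weak_control} from $\int h\,\rd\mu=\int h\,\rd\nu$ for \emph{bounded} $h\in\mathcal{F}_\Theta$, you should note that the lemma's proof only ever uses the bounded sigmoids $f_{x^{\ast}}^{(c)}$ (general elements of $\mathcal{F}_\Theta$ need not be integrable), or more directly observe that $h_\#\mu=h_\#\nu$ for all such $h$ already pins down the joint \ac{cdf}.
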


\begin{proof}
	If $\mu \neq \nu$ then, as in the proof of \Cref{lem:weak_control}, we can identify $x^{\ast} \in \reals^d$ such that $d_{\text{K}}(x^{\ast}; \mu, \nu) > 0$.
	Since $\mu$ and $\nu$ are regular, the function $x \mapsto d_{\text{K}}(x; \mu, \nu)$ is continuous on $\reals^d$ and there exists an open neighbourhood $N(x^{\ast})$ of $x^{\ast}$ such that $d_{\text{K}}(x; \mu, \nu) > 0$ for all $x \in N(x^{\ast})$.

	Suppose, to arrive at a contradiction, that $\mathbb{P}(f(\theta) \leq f(\vartheta)) = 1/2$ for all $f \in \mathcal{F}_\Theta$.
	Then, for all $x \in N(x^{\ast})$, we can construct functions $f_x^{(c)} \in \mathcal{F}_\Theta$ as per \eqref{eq:def_fc}, for which it holds that
	\[
		\mathbb{P}(\vartheta \leq x) = \mathbb{P}(f_x^{(\infty)}(\theta) \leq f_x^{(\infty)}(\vartheta)) = \lim_{c \to \infty} \mathbb{P}(f_x^{(c)}(\theta) \leq f_x^{(c)}(\vartheta)) = \frac{1}{2} .
	\]
	But $\nu$ was assumed to be regular, meaning that $\nu$ has a positive Lebesgue \ac{pdf}, so that $\mathbb{P}(\vartheta \leq x) = 1/2$ cannot simultaneously hold for all $x \in N(x^{\ast})$.
	Indeed, since $N(x^{\ast})$ is open, there exists $x \in N(x^{\ast})$ such that $x_i^{\ast} < x_i$ for all $i = 1,\dots,d$.
	Then $\mathbb{P}(\vartheta \leq x) = \mathbb{P}(\vartheta \leq x^{\ast}) + \nu(S)$, where $S := (-\infty, x] \setminus (-\infty,x^{\ast}]$ is a measurable set with $\nu(S) > 0$.
	This contradiction completes the proof.
\end{proof}

\begin{proof}[Proof of \Cref{lem:S_implies_W}]
	Fix $\mu_0 \in B$.
	Let $\theta \sim \mu_0$, $y | \theta \sim P_\theta$ and $\vartheta | \theta, y \sim \mu(\mu_0,y)$.
	First we argue that the distribution $\nu \defeq \iint \mu(\mu_0,y) \, \wrt P_\theta (y) \, \wrt\mu_0(\theta)$ of the random variable $\vartheta$ is regular.
	Since $\mu$ is a regular learning procedure, $\mu(\mu_0,y)$ admits a \ac{pdf} $p_{\mu(\mu_0,y)}$ for each $y \in \reals^d$.
	Thus, $\nu$ admits the \ac{pdf}
	\[
		p_\nu(x) \defeq \int p_{\mu(\mu_0,y)}(x) \, \rd Q(y), \qquad Q \defeq \int P_\theta \, \rd \mu(\theta)
	\]
	and our task is to establish that this \ac{pdf} is positive on $\reals^d$.
	Fix $x \in \reals^d$.
	Now, since $p_{\mu(\mu_0,y)}(x) > 0$ for all $y \in \reals^d$, we have
	\[
		\reals^d = \bigcup_{n \in \naturals} S_n, \qquad S_n \defeq \left\{ y \in \reals^d \left| p_{\mu(\mu_0,y)}(x) > \frac{1}{n} \right. \right\} .
	\]
	Since $Q$ is a probability distribution on $\reals^d$, it follows that for some $n \in \naturals$, $Q(S_n) > 0$.
	Therefore
	\[
		p_\nu(x) = \int p_{\mu(\mu_0,y)}(x) \, \rd Q(y) > \frac{1}{n} Q(S_n) > 0
	\]
	and, since this argument holds for all $x \in \reals^d$, $p_\nu$ is a positive \ac{pdf} on $\reals^d$ and $\nu$ is regular.

	Next, since $\mu_0$ and the learning procedure $\mu$ are regular, and $\mu$ is strongly calibrated, for each $f \in \mathcal{F}_\Theta$,
	\begin{align}
		F_{f_\# \mu(\mu_0,y)}(f(\theta)) = \mathbb{P}(f(\vartheta) \leq f(\theta) | \theta, y) \sim \mathcal{U}(0,1)
	\end{align}
	and taking expectations of both sides yields
	\begin{align}
		\label{eq:half}
		\mathbb{P}(f(\vartheta) \leq f(\theta) ) = \frac{1}{2}.
	\end{align}
	Since both $\mu_0$ and $\nu$ are regular, it follows from \Cref{cor:leq_char} and \eqref{eq:half} that $\mu_0 = \nu$, and so $\vartheta$ has the marginal distribution $\mu_0$.
	Thus we have shown that the learning procedure $\mu$ is weakly calibrated to the belief distribution $\mu_0$ and the data-generating model $P$.
\end{proof}

\section{Probabilistic Numerical Methods for ODEs}
\label{app: PN}
This appendix contains full details of how the \acp{pnm} in \Cref{subsec: ODE calib} were implemented:

\vspace{5pt}
\begin{itemize} 
\item The code for \citet{chkrebtii16} was taken from $\mathtt{git.io/J33lL}$ and the step-size was set at $h=0.1$. The following settings were used: $\mathtt{nsolves}=100$, $\mathtt{N}=100$, $\mathtt{nevalpoints}=500$, $\mathtt{lambda} = 0.08$ and $\mathtt{alpha} = 1$. 
These values were manually selected, over the default values recommended in the code, since they led to improved calibration of the output.
Rigorous optimisation of these settings was not attempted.

\item The code for \citet{teymur18} was provided to us by the authors and is not yet publicly released. The method used is the 2-step (i.e. order 3) probabilistic Adams--Moulton method with step-size $h=0.5$ and overall scaling parameter $\alpha = 0.3$. 
These values were manually selected with the intention of improving calibration of the output, but rigorous optimisation of these settings was not attempted.
The stepwise perturbations are scaled using the global calibration procedure described in \cite{conrad16}.

\item The code for both \cite{schober18} and \cite{Tronarp18} derives from the comprehensive open-source Python package $\mathtt{probnum}$. 
On the advice of the authors of this package we implemented the adaptive routine $\mathtt{probnum/diffeq.probsolve\_ivp}$.
In this case the default values of tolerances were used. 
The only hyperparameter it is required to set is $\mathtt{algo\_order}$, which we set to $\mathtt{3}$. 
The setting $\mathtt{method=EK0}$ corresponds to \cite{schober18}, and $\mathtt{method=EK1}$ corresponds to \cite{Tronarp18}.

\item The code for \citet{teymur21} was provided to us by the authors and expected to be made public on full publication of that paper. 
This method is based on multi-fidelity simulation, so we take $h\in \{0.1,0.2,0.4\}$ and solve the {\scshape{ode}} using a 2-step (i.e. order 2) Adams--Bashforth method. All other hyperparameters are optimised automatically as part of the routine.
\end{itemize}

\end{appendices}
\end{document}